\newtheorem{theorem}{Theorem}[section]
\newtheorem{lemma}{Lemma}[section]
\newtheorem{remark}{Remark}[section]
\newtheorem{corollary}{Corollary}[section]
\newtheorem{example}{Example}[section]
\newtheorem{proposition}{Proposition}[section]
\numberwithin{equation}{section}
\begin{document}
	
\title{New orders among Hilbert space operators}
\author{Mohammad Sababheh and Hamid Reza Moradi}
\subjclass[2010]{47A08,47A12,47A30,47A60}
\keywords{Loewner partial ordering, block matrix, numerical radius, hyponormal operator, $(\alpha,\beta)$-normal operator, singular values}

\pagestyle{myheadings}
\markboth{\centerline {}}
{\centerline {}}
\bigskip
\bigskip


\begin{abstract}
This article introduces several new relations among related Hilbert space operators. In particular, we prove some L\"{o}ewner partial orderings among $T, |T|, \mathcal{R}T, \mathcal{I}T, |T|+|T^*|$ and many other related forms, as a new discussion in this field; where $\mathcal{R}T$ and $\mathcal{I}T$ are the real and imaginary parts of the operator $T$. Our approach will be based on proving the positivity of some new matrix operators, where several new forms for positive matrix operators will be presented as a key tool in obtaining the other ordering results. As an application, we present some results treating numerical radius inequalities in a way that extends some known results in this direction, in addition to some results about the singular values.
\end{abstract}
\maketitle

\section{Introduction}
Let $\mathcal{H}$ be a complex Hilbert space, endowed with the inner product $\left<\cdot,\cdot\right>$, and let $\mathcal{B}(\mathcal{H})$ denote the $C^*-$algebra of all bounded linear operators on $\mathcal{H}$.   An operator $T\in\mathcal{B}(\mathcal{H})$ is said to be positive semi-definite if $\left<Ax,x\right>\geq 0$ for all $x\in\mathcal{H}$. Such operator is then denoted by $A\geq O$. If $A\geq O$ is invertible, we write $A>O.$ Unlike real numbers, the algebra $\mathcal{B}(\mathcal{H})$ is not totally ordered. That is, if $A\in\mathcal{B}(\mathcal{H})$, then it is not necessarily that $A\geq O$ or $-A\geq O.$ A possible ordering among elements of $\mathcal{B}(\mathcal{H})$ is the so called L\"owner partial ordering; where we say that $A\geq B$ for two self-adjoint operators $A,B\in\mathcal{B}(\mathcal{H}),$ if $A-B\geq O.$

Among the most well established operator inequalities is the well known arithmetic-geometric mean inequality which has more than one form. If $\|\cdot\|$ denotes the usual operator norm on $\mathcal{B}(\mathcal{H})$, that is $\|T\|=\sup_{\|x\|=1}\|Tx\|$, then for any two operators $S,T$ one has the inequality \cite{bhakitt}
\begin{equation}\label{eq_amgm_norm}
\|ST^*\|\leq\frac{1}{2}\|\;|S|^2+|T|^2\|
\end{equation}
where $T^*$ denotes the conjugate of $T$ and $|T|$ is the unique positive root of $T^*T.$ This inequality is referred to as an arithmetic-geometric mean inequality since it extends the scalar inequality $ab\leq\frac{a^2+b^2}{2},$ for the real numbers $a,b$. Although \eqref{eq_amgm_norm} is true, its L\"owner version is not. That is, we cannot have
\begin{equation}\label{eq_amgm_order_wrong}
ST^*\leq\frac{1}{2}(|S|^2+|T|^2)
\end{equation}
in general. A simple reasoning here is that $ST^*$ is not necessarily self-adjoint, so the ordering in \eqref{eq_amgm_order_wrong} does not make sense, to begin with.

If $T\in\mathcal{B}(\mathcal{H})$, the real and imaginary parts of $T$ are defined by $\mathcal{R}T=\frac{T+T^*}{2}$ and $\mathcal{I}T=\frac{T-T^*}{2i}.$ A possible extension of \eqref{eq_amgm_norm} would be to compare $2\mathcal{R}(ST)$ with $|S|^2+|T|^2$. We notice here that the triangle inequality immediately implies
\begin{align*}
2\|\mathcal{R}T\|\leq \|T\|+\|T^*\|.
\end{align*}

At this point, it might be asked about the validity of the stronger version
\begin{equation}\label{eq_R_question}
2|\mathcal{R}T|\leq |T|+|T^*|, T\in\mathcal{B}(\mathcal{H}).
\end{equation}
Unfortunately, this inequality is wrong in general, as one can easily check the example
$$T=\left[\begin{array}{ccc}0&1&0\\0&0&1\\0&0&0\end{array}\right].$$ 
To see this, we note that
$$\lambda_2(|2\mathcal{R}T|)=\lambda_2^{\frac{1}{2}}\left(\left[\begin{array}{ccc}1&0&1\\0&2&0\\0&0&1\end{array}\right]\right)=\sqrt{2}\not\leq 1=\lambda_2\left(\left[\begin{array}{ccc}1&0&0\\0&2&0\\0&0&1\end{array}\right]\right)=\lambda_2(|T|+|T^*|).$$
In fact, even the simpler inequality $\mathcal{R}T\leq |T|$ is not true in general, as one can easily check with the example $$T=\left[\begin{array}{cc}0&1\\0&0\end{array}\right]$$ since we have $$\left| T \right|-\mathcal RT=\left[ \begin{matrix}
   0 & -\frac{1}{2}  \\
   -\frac{1}{2} & 1  \\
\end{matrix} \right]\ngeq O.$$
The latter matrix is not positive since its eigenvalues are $\frac{-\sqrt{2}+1}{2}$ and $\frac{\sqrt{2}+1}{2}$.

Thus, it  is valid to search possible 
orderings (without the norm) between 
\begin{itemize}
\item $\mathcal{R}T$ and $|T|.$
\item $2\mathcal{R}T$ and $|T|+|T^*|.$
\item $2\mathcal{R}(ST)$ and $|S|^2+|T|^2.$
\end{itemize}
The sole goal of this paper is to discuss this problem more broadly, leading to several new relations in a more generalized form. However, we will deal with particular forms of matrix operators that imply the desired links. Our discussion will also lead to exciting relations among the singular values and numerical radius inequalities. Here we recall that the numerical radius of an operator $T$ is defined by
$$\omega(T)=\sup\{|\left<Tx,x\right>|: ||x||=1\}.$$

Our method that we use to
prove the desired results is based  mainly on block techniques. For this, we need some results from the literature, as follows.

\begin{lemma}\label{1}
\cite[Chapter 1]{2} The following statements are mutually equivalent, for $A,B,C\in\mathcal{B}(\mathcal{H})$:
\begin{itemize}
\item[(i)] $\left[ \begin{matrix}
   A & C  \\
   {{C}^{*}} & B  \\
\end{matrix} \right]\ge O$.
\item[(ii)] $\left[ \begin{matrix}
   B & {{C}^{*}}  \\
   C & A  \\
\end{matrix} \right]\ge O$.
\end{itemize}
\end{lemma}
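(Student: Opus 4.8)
The plan is to reduce the equivalence to the elementary fact that conjugation by a unitary operator preserves positivity, by using the ``flip'' unitary on the orthogonal sum $\mathcal{H}\oplus\mathcal{H}$.

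First I would introduce $U=\left[ \begin{matrix} O & I \\ I & O \end{matrix} \right]$ acting on $\mathcal{H}\oplus\mathcal{H}$, where $I$ denotes the identity on $\mathcal{H}$. A direct check gives $U^*=U$ and $U^2=I_{\mathcal{H}\oplus\mathcal{H}}$, so that $U$ is a self-adjoint unitary. Next I would carry out the block multiplication
\begin{equation*}
U\left[ \begin{matrix} A & C \\ {{C}^{*}} & B \end{matrix} \right]{{U}^{*}}
=U\left[ \begin{matrix} A & C \\ {{C}^{*}} & B \end{matrix} \right]U
=\left[ \begin{matrix} B & {{C}^{*}} \\ C & A \end{matrix} \right].
\end{equation*}
Finally, I would invoke the standard fact that, for every $X\in\mathcal{B}(\mathcal{H}\oplus\mathcal{H})$ and every unitary $V$, one has $X\ge O$ if and only if $VX{{V}^{*}}\ge O$; indeed $\left< VX{{V}^{*}}z,z \right>=\left< X({{V}^{*}}z),{{V}^{*}}z \right>$ and ${{V}^{*}}$ is a bijection of $\mathcal{H}\oplus\mathcal{H}$. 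Applying this with $V=U$ and $X$ the block operator in (i) yields exactly the equivalence of (i) and (ii).

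As an alternative that bypasses $U$ entirely, one may argue directly at the level of quadratic forms: statement (i) amounts to the inequality $\left< Ax,x \right>+2\operatorname{Re}\left< Cy,x \right>+\left< By,y \right>\ge 0$ for all $x,y\in\mathcal{H}$, while statement (ii) amounts to $\left< Bu,u \right>+2\operatorname{Re}\left< Cu,v \right>+\left< Av,v \right>\ge 0$ for all $u,v\in\mathcal{H}$, and the substitution $u=y$, $v=x$ transforms one condition into the other. I do not expect a genuine obstacle along either route; the only points requiring care are performing the $2\times 2$ block computation correctly (equivalently, doing the relabelling of variables without a sign slip in the cross terms), and, in the unitary approach, recording explicitly that unitary conjugation is an order isomorphism on the self-adjoint part of $\mathcal{B}(\mathcal{H}\oplus\mathcal{H})$.
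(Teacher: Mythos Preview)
Your argument is correct. The paper does not prove this lemma at all; it is stated as a known fact with a citation to Bhatia's book and no proof is given. Your conjugation by the flip unitary $U=\left[\begin{smallmatrix}O&I\\I&O\end{smallmatrix}\right]$ is precisely the standard justification one finds in that reference, and your alternative via the quadratic form is an equally valid direct verification.
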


\begin{lemma}\label{4}
\cite[Theorem 3.4]{1} Let $C$ be self-adjoint and $\left[ \begin{matrix}
   A & C  \\
   {{C}} & B  \\
\end{matrix} \right]\ge O$. Then $A,B> O$ and
\[\pm C\le A\sharp B\]
where the geometric mean $A\sharp B$ for $A,B> O$ is defined as
$A\sharp B={{A}^{\frac{1}{2}}}{{\left( {{A}^{-\frac{1}{2}}}B{{A}^{-\frac{1}{2}}} \right)}^{\frac{1}{2}}}{{A}^{\frac{1}{2}}}$.
\end{lemma}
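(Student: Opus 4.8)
The plan is to treat the two assertions separately. The positivity $A,B\ge O$ is immediate: applying the hypothesis $\left[\begin{matrix} A & C \\ C & B\end{matrix}\right]\ge O$ to vectors of the form $(x,0)$ and $(0,x)$ gives $\langle Ax,x\rangle\ge 0$ and $\langle Bx,x\rangle\ge 0$ for every $x\in\mathcal H$. The real content is the two-sided geometric-mean bound, and for this I would first make two reductions. First, it suffices to prove the one-sided inequality $C\le A\sharp B$ for every self-adjoint $C$ with positive block matrix: conjugating $\left[\begin{matrix} A & C \\ C & B\end{matrix}\right]$ by the unitary $\mathrm{diag}(I,-I)$ shows that $\left[\begin{matrix} A & -C \\ -C & B\end{matrix}\right]\ge O$ as well, so instantiating the one-sided bound at $-C$ yields $-C\le A\sharp B$. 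Second, it suffices to treat the case where $A$ is invertible: replacing $A$ by $A+\varepsilon I$ ($\varepsilon>0$) preserves positivity of the block matrix, and once $C\le (A+\varepsilon I)\sharp B$ is known, letting $\varepsilon\downarrow 0$ and using that $(A+\varepsilon I)\sharp B$ decreases to $A\sharp B$ while Löwner inequalities pass to such limits gives the general conclusion.

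Assume now $A>O$. The Schur-complement factorization $\left[\begin{matrix} A & C \\ C & B\end{matrix}\right]=\left[\begin{matrix} I & 0 \\ CA^{-1} & I\end{matrix}\right]\left[\begin{matrix} A & 0 \\ 0 & B-CA^{-1}C\end{matrix}\right]\left[\begin{matrix} I & A^{-1}C \\ 0 & I\end{matrix}\right]$, valid because $C=C^{*}$ so that the two outer factors are adjoints of one another, shows that the hypothesis is equivalent to $B\ge CA^{-1}C$. By monotonicity of the geometric mean in its second argument, $A\sharp B\ge A\sharp(CA^{-1}C)$, so it remains to show $A\sharp(CA^{-1}C)\ge C$.

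The key computation is the identity $A\sharp(CA^{-1}C)=A^{1/2}\,|D|\,A^{1/2}$, where $D:=A^{-1/2}CA^{-1/2}$ is self-adjoint: indeed $A^{-1/2}(CA^{-1}C)A^{-1/2}=D^{2}$, hence $\bigl(A^{-1/2}(CA^{-1}C)A^{-1/2}\bigr)^{1/2}=|D|$, and the definition of $\sharp$ gives the claim. Since $|D|\ge D$ for every self-adjoint $D$, we get $A^{1/2}|D|A^{1/2}\ge A^{1/2}DA^{1/2}=C$, that is, $A\sharp(CA^{-1}C)\ge C$. Chaining the inequalities completes the argument.

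I expect the only genuinely delicate point to be the reduction to invertible $A$: one must make sure the geometric mean remains well defined there (via the monotone limit $A\sharp B=\lim_{\varepsilon\downarrow 0}(A+\varepsilon I)\sharp B$) and that the Löwner order survives this limit. Everything afterwards — the Schur complement, monotonicity of $\sharp$, the computation of $A\sharp(CA^{-1}C)$, and $|D|\ge D$ — is routine. (Alternatively, one could quote Ando's characterization of $A\sharp B$ as the largest self-adjoint $X$ with $\left[\begin{matrix} A & X \\ X & B\end{matrix}\right]\ge O$, which delivers both bounds simultaneously; but the argument above keeps the exposition self-contained.)
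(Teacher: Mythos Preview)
Your argument is correct, but there is nothing to compare it with: the paper does not prove this lemma, it merely quotes it from \cite[Theorem~3.4]{1} (Ando). Your Schur-complement route --- reduce to invertible $A$, use $B\ge CA^{-1}C$ and monotonicity of $\sharp$, then compute $A\sharp(CA^{-1}C)=A^{1/2}\,|D|\,A^{1/2}$ with $D=A^{-1/2}CA^{-1/2}$ and invoke $|D|\ge D$ --- is a clean self-contained proof; the extremal characterization of $A\sharp B$ that you mention as an alternative is in fact the way the result is packaged in Ando's cited paper, so either approach is appropriate here.
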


\begin{lemma}\label{20}
\cite[Lemma 1]{3} Let $A,B,C\in \mathcal B\left( \mathcal H \right)$, where $A,B\geq O$. Then 
	\[\left[ \begin{matrix}
   A & {{C}^{*}}  \\
   C & B  \\
\end{matrix} \right]\ge O\Leftrightarrow {{\left| \left\langle Cx,y \right\rangle  \right|}^{2}}\le \left\langle  A x,x \right\rangle \left\langle  B y,y \right\rangle ;x,y\in \mathcal H.\]
\end{lemma}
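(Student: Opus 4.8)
The plan is to translate the positivity of the block operator into a scalar statement and then run a homogeneity argument. Since $A,B\ge O$ we have $|A|=A$ and $|B|=B$, so the asserted equivalence is really between $\left[ \begin{matrix} A & C^* \\ C & B \end{matrix} \right]\ge O$ and the bound $|\langle Cx,y\rangle|^2\le\langle Ax,x\rangle\langle By,y\rangle$ for all $x,y\in\mathcal H$. First I would expand the quadratic form of the block matrix on a vector $x\oplus y\in\mathcal H\oplus\mathcal H$: using $\langle C^*y,x\rangle=\overline{\langle Cx,y\rangle}$ one gets $\langle Ax,x\rangle+\langle By,y\rangle+2\,\mathrm{Re}\langle Cx,y\rangle$. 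Replacing $y$ by $e^{i\theta}y$ leaves $\langle Ax,x\rangle$ and $\langle By,y\rangle$ untouched while rotating $\langle Cx,y\rangle$, so choosing $\theta$ to send $\langle Cx,y\rangle$ onto the negative real axis shows that the block matrix is positive if and only if
\[
2|\langle Cx,y\rangle|\le\langle Ax,x\rangle+\langle By,y\rangle\qquad\text{for all }x,y\in\mathcal H,
\]
a condition I will refer to as $(\star)$.

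For the implication $\Leftarrow$ I would assume $|\langle Cx,y\rangle|^2\le\langle Ax,x\rangle\langle By,y\rangle$; then $|\langle Cx,y\rangle|\le\sqrt{\langle Ax,x\rangle\langle By,y\rangle}$, and the elementary inequality $2\sqrt{ab}\le a+b$ with $a=\langle Ax,x\rangle\ge 0$ and $b=\langle By,y\rangle\ge 0$ gives $(\star)$, hence positivity of the block matrix.

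For the implication $\Rightarrow$ I would exploit that $(\star)$ is stable under the rescaling $x\mapsto tx$, $y\mapsto t^{-1}y$ for $t>0$: the left-hand side of $(\star)$ is unchanged while the right-hand side becomes $t^2\langle Ax,x\rangle+t^{-2}\langle By,y\rangle$. Minimizing this over $t>0$ produces $2\sqrt{\langle Ax,x\rangle\langle By,y\rangle}$ when both inner products are strictly positive, and $(\star)$ then forces $|\langle Cx,y\rangle|\le\sqrt{\langle Ax,x\rangle\langle By,y\rangle}$, which is the claimed inequality after squaring. Since this is a known auxiliary fact, there is no deep obstacle; the only place needing care is the degenerate situation $\langle Ax,x\rangle=0$ or $\langle By,y\rangle=0$: if, say, $\langle Ax,x\rangle=0$, then letting $t\to\infty$ in the rescaled $(\star)$ forces $\langle Cx,y\rangle=0$, so the inequality holds trivially, and the case $\langle By,y\rangle=0$ is symmetric via $t\to 0^+$. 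Combining the two implications yields the stated equivalence.
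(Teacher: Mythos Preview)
The paper does not supply its own proof of this lemma; it merely cites it from Kittaneh \cite[Lemma~1]{3} and uses it as a black box. Consequently there is no in-paper argument to compare against, and the relevant question is simply whether your proof is sound.

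It is. Your expansion of the quadratic form of the block operator on $x\oplus y$ is correct, and the phase rotation $y\mapsto e^{i\theta}y$ legitimately reduces positivity to the additive condition $(\star)$. The passage from $(\star)$ to the product inequality via the homogeneity substitution $x\mapsto tx$, $y\mapsto t^{-1}y$ and optimisation in $t$ is exactly the classical device used to pass between arithmetic and geometric forms of a Schwarz-type bound, and you handle the degenerate cases $\langle Ax,x\rangle=0$ or $\langle By,y\rangle=0$ cleanly by sending $t\to\infty$ or $t\to 0^+$. This is in fact the standard proof one finds for this equivalence, so your argument is both correct and canonical.
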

\begin{remark}
It follows from Lemma \ref{20} that if $\left[ \begin{matrix}
   A & {{C}^{*}}  \\
   C & B  \\
\end{matrix} \right]\ge O$, then for any $x\in\mathcal{H}$,
\begin{align*}
|\left<Cx,x\right>|&\leq \sqrt{\left<Ax,x\right>\left<Bx,x\right>}\\
&\leq \frac{\left<Ax,x\right>+\left<Bx,x\right>}{2}\\
&=\left<\frac{A+B}{2}x,x\right>.
\end{align*}
Consequently, for any $x\in\mathcal{H}$, $
\mathcal{R}\left<Cx,x\right>\leq \left<\frac{A+B}{2}x,x\right>.$ But since $\mathcal{R}\left<Cx,x\right>=\left<\mathcal{R}Cx,x\right>,$ it follows that
$\mathcal{R}C\leq \frac{A+B}{2}.$ Further, since $|\left<Cx,x\right>|=|\left<-Cx,x\right>|,$ it follows that $-\mathcal{R}C\leq \frac{A+B}{2}.$ Thus, we conclude the following implication
\begin{equation}\label{eq_re_neg}
\left[ \begin{matrix}
   A & {{C}^{*}}  \\
   C & B  \\
\end{matrix} \right]\ge O\Rightarrow \pm\mathcal{R}C\leq \frac{A+B}{2}.
\end{equation}
\end{remark}
\begin{lemma}\label{010}\cite[Lemma 1.4]{zhan_book}
Let $A,B\geq O$ be invertible operators. Then $\left[ \begin{matrix}
   A & C  \\
   {{C}^{*}} & B  \\
\end{matrix} \right]\geq O$ if and only if $C{{B}^{-1}}{{C}^{*}}\le A$.
\end{lemma}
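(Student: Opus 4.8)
The plan is to recognize Lemma~\ref{010} as the standard Schur-complement criterion and to prove the two implications separately: the forward one by an invertible congruence, the reverse one by a completing-the-square estimate on the quadratic form of the block operator.

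For the forward implication, suppose $\left[ \begin{matrix} A & C \\ C^{*} & B \end{matrix} \right]\ge O$ on $\mathcal H\oplus\mathcal H$. Since $B$ is invertible, I would form the invertible operator $X=\left[ \begin{matrix} I & -CB^{-1} \\ O & I \end{matrix} \right]$ and compute the congruence
\[
X\left[ \begin{matrix} A & C \\ C^{*} & B \end{matrix} \right]X^{*}=\left[ \begin{matrix} A-CB^{-1}C^{*} & O \\ O & B \end{matrix} \right].
\]
Congruence by an invertible operator preserves positivity, so the right-hand side is positive; evaluating its quadratic form on vectors of the form $(x,0)$ yields $\langle (A-CB^{-1}C^{*})x,x\rangle\ge 0$ for all $x\in\mathcal H$, i.e.\ $CB^{-1}C^{*}\le A$. (Equivalently, one may simply substitute the vector $(x,-B^{-1}C^{*}x)$ into the quadratic form of the original block operator and read off the same conclusion.)

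For the reverse implication, suppose $CB^{-1}C^{*}\le A$. For $x,y\in\mathcal H$, expand, using $\langle C^{*}x,y\rangle=\overline{\langle Cy,x\rangle}$,
\[
\left\langle \left[ \begin{matrix} A & C \\ C^{*} & B \end{matrix} \right]\left[ \begin{matrix} x \\ y \end{matrix} \right],\left[ \begin{matrix} x \\ y \end{matrix} \right]\right\rangle =\langle Ax,x\rangle+2\operatorname{Re}\langle Cy,x\rangle+\langle By,y\rangle .
\]
Because $B$ is positive and invertible, $B^{1/2}$ and $B^{-1/2}$ exist, and a direct computation (using $\langle B^{1/2}y,B^{-1/2}C^{*}x\rangle=\langle Cy,x\rangle$ and $\langle B^{-1}C^{*}x,C^{*}x\rangle=\langle CB^{-1}C^{*}x,x\rangle$) gives
\[
\bigl\|B^{1/2}y+B^{-1/2}C^{*}x\bigr\|^{2}=\langle By,y\rangle+2\operatorname{Re}\langle Cy,x\rangle+\langle CB^{-1}C^{*}x,x\rangle .
\]
Substituting, the quadratic form above equals $\langle (A-CB^{-1}C^{*})x,x\rangle+\bigl\|B^{1/2}y+B^{-1/2}C^{*}x\bigr\|^{2}$, which is nonnegative by hypothesis. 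Hence $\left[ \begin{matrix} A & C \\ C^{*} & B \end{matrix} \right]\ge O$.

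The argument is essentially routine, so there is no serious obstacle; the only points requiring a little care are the bookkeeping of the inner-product identities in the completing-the-square step and checking that the invertibility hypothesis on $B$ (hence the existence of $B^{-1}$ and $B^{-1/2}$) is genuinely used at each stage. I expect the write-up to be short.
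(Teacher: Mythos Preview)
Your argument is correct; both directions are the standard Schur-complement proofs and the bookkeeping in the completing-the-square step checks out. Note, however, that the paper does not actually supply a proof of this lemma: it is listed among the preliminary results quoted from the literature and is stated without argument or citation, so there is no ``paper's own proof'' to compare against. Your write-up would serve perfectly well as the missing justification; if anything, you could shorten the reverse implication by simply running the congruence backwards (since $A-CB^{-1}C^{*}\ge O$ and $B\ge O$ give $\left[\begin{smallmatrix}A-CB^{-1}C^{*}&O\\O&B\end{smallmatrix}\right]\ge O$, and conjugating by $X^{-1}$ recovers the original block), which avoids introducing $B^{\pm 1/2}$ altogether.
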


\begin{lemma}\label{16}
 \cite[Lemma 3.1]{1} Let $A_1,A_2, B_1,B_2, C\in \mathcal B\left( \mathcal H \right)$, where $A_1,A_2, B_1,B_2\ge O$. Then
\[\left[ \begin{matrix}
   {{A}_{i}} & C  \\
   {{C}^{*}} & {{B}_{i}}  \\
\end{matrix} \right]\ge O\left( i=1,2 \right)\Rightarrow \left[ \begin{matrix}
   {{A}_{1}}\sharp{{A}_{2}} & C  \\
   {{C}^{*}} & {{B}_{1}}\sharp{{B}_{2}}  \\
\end{matrix} \right]\ge O.\]
\end{lemma}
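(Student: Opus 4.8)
The plan is to reduce the claim, via the Schur‑complement criterion of Lemma~\ref{010}, to two congruence‑type inequalities for the operator geometric mean, the decisive one being the \emph{transformer inequality} $D^{*}(X\sharp Y)D\le(D^{*}XD)\sharp(D^{*}YD)$, which will itself be read off from Lemma~\ref{4}. Since Lemma~\ref{010} requires invertibility, I would first pass to the invertible case: replace $A_i$ by $A_i+\varepsilon I$ and $B_i$ by $B_i+\varepsilon I$ with $\varepsilon>0$, which preserves the hypothesis because $\left[\begin{smallmatrix}A_i+\varepsilon I & C\\ C^{*} & B_i+\varepsilon I\end{smallmatrix}\right]=\left[\begin{smallmatrix}A_i & C\\ C^{*} & B_i\end{smallmatrix}\right]+\varepsilon I\ge O$, and recover the general case at the end by letting $\varepsilon\to0^{+}$ (the geometric mean is norm‑continuous and the positive cone is norm‑closed). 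So assume $A_1,A_2,B_1,B_2$, and hence $A_1\sharp A_2$ and $B_1\sharp B_2$, are invertible. By Lemma~\ref{010}, the two hypotheses say exactly $CB_i^{-1}C^{*}\le A_i$ for $i=1,2$.

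\textbf{Transformer inequality.} For positive invertible $X,Y$ the block operator $\left[\begin{smallmatrix}X & X\sharp Y\\ X\sharp Y & Y\end{smallmatrix}\right]$ is positive: indeed $(X\sharp Y)Y^{-1}(X\sharp Y)=X$ by a direct computation with $X\sharp Y=X^{1/2}(X^{-1/2}YX^{-1/2})^{1/2}X^{1/2}$, so Lemma~\ref{010} applies. Applying the congruence $M\mapsto \mathrm{diag}(D^{*},D^{*})\,M\,\mathrm{diag}(D,D)$ (which preserves positivity) gives
\[
\left[\begin{matrix} D^{*}XD & D^{*}(X\sharp Y)D\\ D^{*}(X\sharp Y)D & D^{*}YD\end{matrix}\right]\ge O;
\]
since $D^{*}(X\sharp Y)D$ is self‑adjoint, Lemma~\ref{4} yields $D^{*}(X\sharp Y)D\le(D^{*}XD)\sharp(D^{*}YD)$.

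Now I assemble the pieces: using the inversion identity $(B_1\sharp B_2)^{-1}=B_1^{-1}\sharp B_2^{-1}$, then the transformer inequality with $D=C^{*}$, $X=B_1^{-1}$, $Y=B_2^{-1}$, and finally the joint monotonicity of $\sharp$ together with $CB_i^{-1}C^{*}\le A_i$,
\[
C(B_1\sharp B_2)^{-1}C^{*}=C\bigl(B_1^{-1}\sharp B_2^{-1}\bigr)C^{*}\le\bigl(CB_1^{-1}C^{*}\bigr)\sharp\bigl(CB_2^{-1}C^{*}\bigr)\le A_1\sharp A_2.
\]
By Lemma~\ref{010} this is precisely $\left[\begin{smallmatrix}A_1\sharp A_2 & C\\ C^{*} & B_1\sharp B_2\end{smallmatrix}\right]\ge O$, and letting $\varepsilon\to0^{+}$ finishes the proof.

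The only genuine obstacle is the transformer inequality, i.e.\ the good behaviour of $\sharp$ under congruences; the mild trick that makes it painless here is to observe that $\left[\begin{smallmatrix}X & X\sharp Y\\ X\sharp Y & Y\end{smallmatrix}\right]\ge O$, after which Lemma~\ref{4} hands it over for free (alternatively one could simply quote it from Kubo--Ando theory). Everything else — the $\varepsilon$‑perturbation, the identity $(B_1\sharp B_2)^{-1}=B_1^{-1}\sharp B_2^{-1}$, and the monotonicity and continuity of $\sharp$ — is standard and requires no real work.
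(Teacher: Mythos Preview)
The paper does not prove this lemma; it merely quotes it from Ando \cite[Lemma~3.1]{1}, so there is no in-paper argument to compare against. Your proof is correct and self-contained: the $\varepsilon$-perturbation to invertibles, the Schur-complement reduction via Lemma~\ref{010}, the transformer inequality $D^{*}(X\sharp Y)D\le(D^{*}XD)\sharp(D^{*}YD)$ obtained by congruencing $\left[\begin{smallmatrix}X & X\sharp Y\\ X\sharp Y & Y\end{smallmatrix}\right]\ge O$ and invoking Lemma~\ref{4}, the inversion identity $(B_1\sharp B_2)^{-1}=B_1^{-1}\sharp B_2^{-1}$, and the monotonicity of $\sharp$ all check out. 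This is in spirit the same as Ando's original argument, which rests on the variational characterisation of $\sharp$ as the maximal self-adjoint $X$ with $\left[\begin{smallmatrix}A & X\\ X & B\end{smallmatrix}\right]\ge O$; your route via Lemma~\ref{010} and Lemma~\ref{4} is a clean repackaging of the same idea using only tools already stated in the paper.
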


The following lemma treats the singular values of a matrix. By $\mathcal{M}_n$, we refer to the algebra of complex $n\times n$ matrices. Although this lemma is stated originally for compact operators, we limit it here to matrices. In the sequel, if $T\in\mathcal{M}_n$, $s_j(T)$ will denote the $j^{\text{th}}$ largest eigenvalue of $|T|.$
\begin{lemma}\label{0}
\cite[Theorem 2.1]{14} Let $A,B,C\in \mathcal{M}_n$ be such that 	$\left[ \begin{matrix}
   A & {{C}^{*}}  \\
   C & B  \\
\end{matrix} \right]\ge O$. Then ${{s}_{j}}\left( C \right)\le {{s}_{j}}\left( A\oplus B \right)$ for $j=1,2,\ldots ,n$. Here we denote the block matrix $\left[ \begin{matrix}
   A & O  \\
   O & B  \\
\end{matrix} \right]$ by $A\oplus B$.
\end{lemma}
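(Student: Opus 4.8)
The plan is to reduce the claim to Weyl's monotonicity principle for Hermitian matrices by passing to the Hermitian dilation of $C$. First I would introduce the self-adjoint matrix
\[
\widetilde{C}=\left[\begin{matrix} O & C^* \\ C & O \end{matrix}\right],
\]
and record the standard spectral fact that, writing a singular value decomposition $C=U\Sigma V^*$ with $\Sigma=\mathrm{diag}(s_1(C),\dots,s_n(C))$, the $2n$ eigenvalues of $\widetilde C$ are exactly $\pm s_1(C),\dots,\pm s_n(C)$ (one verifies this by noting that $\widetilde C\left[\begin{matrix} u \\ v \end{matrix}\right]=\lambda\left[\begin{matrix} u \\ v \end{matrix}\right]$ forces $C^*Cu=\lambda^2 u$, so $\lambda=\pm s_k(C)$). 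Arranged in decreasing order this yields $\lambda_j(\widetilde C)=s_j(C)$ for $j=1,\dots,n$, where $\lambda_j$ denotes the $j$-th largest eigenvalue.

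The second step extracts the positivity hypothesis through a sign flip. I would observe that
\[
\left[\begin{matrix} A & C^* \\ C & B \end{matrix}\right]=(A\oplus B)+\widetilde C,
\]
and that conjugating by the unitary $D=I\oplus(-I)$ reverses the off-diagonal sign:
\[
D\left[\begin{matrix} A & C^* \\ C & B \end{matrix}\right]D^*=\left[\begin{matrix} A & -C^* \\ -C & B \end{matrix}\right]=(A\oplus B)-\widetilde C.
\]
Since unitary conjugation preserves positivity, the hypothesis gives $(A\oplus B)-\widetilde C\ge O$, that is, the L\"owner inequality $\widetilde C\le A\oplus B$ between self-adjoint matrices.

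For the third step I would apply Weyl's monotonicity theorem (a direct consequence of the Courant--Fischer min-max formula) to $\widetilde C\le A\oplus B$, which yields $\lambda_j(\widetilde C)\le\lambda_j(A\oplus B)$ for every $j$. Because $A\oplus B\ge O$, its eigenvalues coincide with its singular values, so $\lambda_j(A\oplus B)=s_j(A\oplus B)$. Combining with the first step,
\[
s_j(C)=\lambda_j(\widetilde C)\le\lambda_j(A\oplus B)=s_j(A\oplus B),\qquad j=1,\dots,n,
\]
which is exactly the assertion.

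The routine parts are the spectral identification of $\widetilde C$ and the invocation of Weyl's inequality; the only genuinely delicate point is the bookkeeping in the first step. One must check that the $n$ largest eigenvalues of $\widetilde C$ are precisely $s_1(C)\ge\cdots\ge s_n(C)$ (the remaining $n$ eigenvalues being their negatives), so that the one-sided comparison $\widetilde C\le A\oplus B$ captures the singular values of $C$ in the correct order rather than some interlaced mixture. I note that only the upper bound $\widetilde C\le A\oplus B$ is needed here; the complementary inequality $-(A\oplus B)\le\widetilde C$, which follows at once from $M\ge O$, plays no role in the argument.
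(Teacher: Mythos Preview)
The paper does not supply a proof of this lemma; it is quoted from \cite{14} without argument, so there is no in-paper proof to compare against. Your argument is correct and is in fact the standard one: pass to the Hermitian dilation $\widetilde C$, use the unitary $I\oplus(-I)$ to convert the positivity hypothesis into $\widetilde C\le A\oplus B$, and invoke Weyl monotonicity together with the identification $\lambda_j(\widetilde C)=s_j(C)$ for $1\le j\le n$. The bookkeeping caveat you flag is handled correctly.
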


\begin{lemma}\cite{13}\label{tao}
Let $A,B,C\in \mathcal{M}_n$ be such that 	$\left[ \begin{matrix}
   A & {{C}^{*}}  \\
   C & B  \\
\end{matrix} \right]\ge O$. Then 
\[2{{s}_{j}}\left( C \right)\le {{s}_{j}}\left( \left[ \begin{matrix}
   A & {{C}^{*}}  \\
   C & B  \\
\end{matrix} \right] \right)\]
for $j=1,2,\ldots ,n$. 
\end{lemma}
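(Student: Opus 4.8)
The plan is to deduce the inequality from the Courant--Fischer min--max characterization of the eigenvalues of the positive semidefinite matrix $M:=\left[\begin{matrix} A & C^{*} \\ C & B\end{matrix}\right]\in\mathcal M_{2n}$. Since $M\ge O$ is Hermitian we have $s_{j}(M)=\lambda_{j}(M)$, the $j$-th largest eigenvalue, and
\[\lambda_{j}(M)=\max_{\dim \mathcal V=j}\ \min_{0\ne z\in \mathcal V}\frac{\left\langle Mz,z\right\rangle}{\left\langle z,z\right\rangle},\]
the maximum ranging over $j$-dimensional subspaces $\mathcal V$ of $\mathbb C^{n}\oplus\mathbb C^{n}$. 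Hence it is enough to exhibit one $j$-dimensional subspace on which the quadratic form of $M$ dominates $2s_{j}(C)\left\langle z,z\right\rangle$. One may assume $j\le n$, since for $j>n$ we have $s_{j}(C)=0$ and the claim is immediate from $M\ge O$; the diagonal blocks $A,B$ of $M$ are positive semidefinite, which will be needed below.

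To build the subspace I would use a singular value decomposition of $C$: choose orthonormal vectors $x_{1},\dots,x_{j}$ and $y_{1},\dots,y_{j}$ in $\mathbb C^{n}$ with $Cx_{i}=s_{i}(C)\,y_{i}$ for $i=1,\dots,j$ (left singular vectors attached to vanishing singular values, if any occur among the first $j$, being completed arbitrarily to an orthonormal set). Put
\[\mathcal V=\left\{\ \left[ \begin{matrix} x\\ y\end{matrix}\right]\ :\ x=\sum_{i=1}^{j}a_{i}x_{i},\quad y=\sum_{i=1}^{j}a_{i}y_{i},\quad a_{1},\dots,a_{j}\in\mathbb C\ \right\}.\]
The map $(a_{i})\mapsto (x,y)$ is linear and injective (the $a_{i}$ are already recovered from $x$), so $\dim\mathcal V=j$. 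For $z=\left[\begin{matrix}x\\ y\end{matrix}\right]\in\mathcal V$ one has $\|y\|^{2}=\sum|a_{i}|^{2}=\|x\|^{2}$, hence $\|z\|^{2}=2\|x\|^{2}$, and expanding,
\[\left\langle Mz,z\right\rangle=\left\langle Ax,x\right\rangle+\left\langle By,y\right\rangle+2\,\mathrm{Re}\,\left\langle Cx,y\right\rangle .\]
The off-diagonal term is controlled by the construction: $\left\langle Cx,y\right\rangle=\sum_{i=1}^{j}|a_{i}|^{2}s_{i}(C)$ is real and nonnegative, and $\ge s_{j}(C)\|x\|^{2}$, so $2\,\mathrm{Re}\,\left\langle Cx,y\right\rangle\ge 2s_{j}(C)\|x\|^{2}$.

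For the diagonal part I would invoke Lemma \ref{20}: since $A,B\ge O$ and $M\ge O$, it gives $\left|\left\langle Cx,y\right\rangle\right|^{2}\le\left\langle Ax,x\right\rangle\left\langle By,y\right\rangle$, whence, by the scalar AM--GM inequality,
\[\left\langle Ax,x\right\rangle+\left\langle By,y\right\rangle\ \ge\ 2\sqrt{\left\langle Ax,x\right\rangle\left\langle By,y\right\rangle}\ \ge\ 2\left|\left\langle Cx,y\right\rangle\right|\ \ge\ 2s_{j}(C)\|x\|^{2}.\]
Adding the two estimates yields $\left\langle Mz,z\right\rangle\ge 4s_{j}(C)\|x\|^{2}=2s_{j}(C)\|z\|^{2}$ for every $z\in\mathcal V$, and the min--max formula then forces $s_{j}(M)=\lambda_{j}(M)\ge 2s_{j}(C)$.

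The one genuinely delicate point is the factor $2$: a crude estimate ignoring the interaction of the two coordinate blocks only delivers $s_{j}(M)\ge s_{j}(C)$ (essentially Lemma \ref{0} again). The doubling comes from pairing the right singular vectors of $C$ with the corresponding left singular vectors across the two blocks, so that the off-diagonal quadratic form is not merely bounded but is positive and of the correct size, together with Lemma \ref{20}, which is exactly what prevents the diagonal blocks from being small relative to it. The remaining checks --- the dimension count for $\mathcal V$, the reality/positivity of $\left\langle Cx,y\right\rangle$, and the trivial range $j>n$ --- are routine.
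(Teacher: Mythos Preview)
The paper does not supply a proof of this lemma; it is quoted from \cite{13} as a preliminary fact. So there is nothing to compare against in the paper itself.

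Your argument is correct. The variational route via Courant--Fischer, together with the subspace $\mathcal V$ spanned by the paired singular vectors $\begin{pmatrix}x_i\\ y_i\end{pmatrix}$, is exactly the standard way this inequality is established, and indeed is essentially Tao's original proof. One small simplification: you do not need to detour through Lemma~\ref{20} plus AM--GM to get $\langle Ax,x\rangle+\langle By,y\rangle\ge 2\operatorname{Re}\langle Cx,y\rangle$. Since $M\ge O$, apply positivity to the vector $\begin{pmatrix}x\\ -y\end{pmatrix}$ directly; this yields $\langle Ax,x\rangle+\langle By,y\rangle-2\operatorname{Re}\langle Cx,y\rangle\ge 0$, which is all you use (the quantity $\langle Cx,y\rangle$ is already real and nonnegative on $\mathcal V$). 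The rest of your estimates, the dimension count for $\mathcal V$, and the handling of the degenerate case $j>n$ are all in order.
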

Related to the singular values of matrices, we recall that a unitarily invariant norm $\|\cdot\|_u$ on $\mathcal{M}_n$ is a matrix norm that satisfies $\|UAV\|_u=\|A\|_u$ for all $A\in\mathcal{M}_n$ and all unitary matrices $U,V.$

In treating the inequality $\mathcal{R}T\leq |T|$, we present a class that satisfies this relation. Here, we recall that an operator $T\in\mathcal{B}(\mathcal{H})$ is said to be semi-hyponormal if $|T^*|\leq |T|$; see \cite{6}. For example, normal operators are semi-hyponormal.

We will be also interested in the so called $(\alpha,\beta)$-normal operators. For real numbers $\alpha $ and $\beta $ with $0< \alpha \le 1\le \beta $, an operator $T\in \mathcal B\left( \mathcal H \right)$  is called $\left( \alpha ,\beta  \right)$-normal  \cite{8}, if
\begin{equation}\label{21}
{{\alpha }^{2}}{{\left| T \right|}^{2}}\le {{\left| {{T}^{*}} \right|}^{2}}\le {{\beta }^{2}}{{\left| T \right|}^{2}}.
\end{equation}
According to this definition, if  $T$ is $\left( \alpha ,\beta  \right)$-normal operator, then $T$ and ${{T}^{*}}$ majorize each other.

\section{Operator Inequalities}

In this section, we present our main results. This will be done in two subsections, where we study inequalities involving one operator in the first section, then we discuss two-operator inequalities.

\subsection{One-operator inequalities}
From \cite[Corollary 1.3.8 ]{2}, we know that if $T$ is a normal operator, then 
\begin{equation}\label{7}
\left[ \begin{matrix}
   \left| T \right| & {{T}^{*}}  \\
   T & \left| T \right|  \\
\end{matrix} \right]\ge O.
\end{equation}
This inequality can be extended to semi-hyponormal operators as follows.
	
\begin{proposition}\label{03}
Let $T\in \mathcal B\left( \mathcal H \right)$ be a semi-hyponormal operator. Then \eqref{7} holds.
\end{proposition}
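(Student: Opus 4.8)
The plan is to realize the block matrix in \eqref{7} as the matrix of a positive operator acting on $\mathcal{H}\oplus\mathcal{H}$ by exhibiting an explicit factorization, using the polar decomposition $T=U|T|$ with $U$ a partial isometry satisfying $U^*U$ equal to the projection onto $\overline{\operatorname{ran}}\,|T|$. The key observation is that $|T^*|=U|T|U^*$, so the semi-hyponormality hypothesis $|T^*|\le|T|$ reads $U|T|U^*\le|T|$. I would try to write
\[
\left[\begin{matrix} |T| & T^* \\ T & |T| \end{matrix}\right]
=\left[\begin{matrix} |T| & |T|U^* \\ U|T| & |T| \end{matrix}\right],
\]
using $T=U|T|$ and $T^*=|T|U^*$, and then look for a representation of this as $X^*X$ or as a sum of a positive block and a Schur-complement term.

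The cleanest route is to invoke Lemma~\ref{010} (or rather its limiting form, since $|T|$ need not be invertible): the block matrix is positive exactly when the Schur complement condition $T|T|^{-1}T^*\le|T|$ holds on the appropriate range, i.e. $U|T|U^*\le|T|$ after substituting $T=U|T|$ and simplifying $U|T|\,|T|^{-1}\,|T|U^*=U|T|U^*$. Since $U|T|U^*=|T^*|$, this is precisely the semi-hyponormality assumption. To handle the non-invertible case rigorously, I would replace $|T|$ by $|T|+\varepsilon I$ and pass to the limit $\varepsilon\to 0^+$, checking that the relevant Schur complement inequality survives; alternatively, one can verify positivity directly by computing $\langle M(x\oplus y), x\oplus y\rangle$ for $M$ the block matrix and completing the square. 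Either way, the substitution $T=U|T|$ and the identity $|T^*|=U|T|U^*$ do the essential work.

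The main obstacle I anticipate is the lack of invertibility of $|T|$: both Lemma~\ref{010} and a naive square-completion argument want to divide by $|T|$, and one must either argue via an $\varepsilon$-perturbation or work on $\overline{\operatorname{ran}}\,|T|$ and its orthogonal complement separately, being careful that $U$ is only a partial isometry. A secondary technical point is verifying cleanly that $|T^*|^2=U|T|^2U^*$, hence $|T^*|=U|T|U^*$, from the polar decomposition; this is standard but should be stated. Once these are in place, the positivity of \eqref{7} follows immediately from $|T^*|\le|T|$, and specializing to normal $T$ (where $|T^*|=|T|$) recovers the known case, consistent with the cited \cite[Corollary 1.3.8]{2}.
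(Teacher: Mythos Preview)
Your argument is correct, but it is genuinely different from the paper's proof. The paper does not touch the polar decomposition or Schur complements at all: it simply quotes the mixed Schwarz inequality $|\langle Tx,y\rangle|^{2}\le\langle|T|x,x\rangle\langle|T^{*}|y,y\rangle$ (valid for \emph{every} $T$), replaces $\langle|T^{*}|y,y\rangle$ by $\langle|T|y,y\rangle$ using semi-hyponormality, and then invokes Lemma~\ref{20} to translate the resulting inner-product inequality directly into the positivity of the block in \eqref{7}. This is a two-line argument with no invertibility issues to finesse.

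Your route via Lemma~\ref{010} and $T=U|T|$ has its own merits: it makes transparent that, at least when $|T|$ is invertible, the positivity of \eqref{7} is \emph{equivalent} to $|T^{*}|\le|T|$, so semi-hyponormality is not just sufficient but essentially characterizes when \eqref{7} holds. The cost is the $\varepsilon$-perturbation step; your sketch of it is fine (use $t^{2}/(t+\varepsilon)\le t$ together with monotonicity of $X\mapsto UXU^{*}$, then let $\varepsilon\to 0$), but it is extra bookkeeping that the paper's approach avoids entirely. If you want the shortest path to the stated proposition, the mixed Schwarz inequality plus Lemma~\ref{20} is the intended and cleaner choice.
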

\begin{proof}
By the mixed Cauchy-Schwarz inequality \cite{7}, we have for any $x,y\in \mathcal H$
\[{{\left| \left\langle Tx,y \right\rangle  \right|}^{2}}\le \left\langle \left| T \right|x,x \right\rangle \left\langle \left| {{T}^{*}} \right|y,y \right\rangle.\]
Since  $T$ is a semi-hyponormal, then
\[{{\left| \left\langle Tx,y \right\rangle  \right|}^{2}}\le \left\langle \left| T \right|x,x \right\rangle \left\langle \left| T \right|y,y \right\rangle\]
which is equivalent to \eqref{7}, thanks to Lemma \ref{20}.
\end{proof}

Now we are ready to present the following possible relation between $\mathcal{R}T$ and $|T|,$ for semi-hyponormal operators. 
\begin{theorem}\label{3}
Let $T\in \mathcal B\left( \mathcal H \right)$ be a semi-hyponormal operator. Then
	\[\pm\mathcal RT\le \left| T \right|.\]
\end{theorem}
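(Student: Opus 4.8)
The plan is to convert the block positivity from Proposition \ref{03} into a symmetric block matrix whose off-diagonal entry is exactly $\mathcal{R}T$, and then invoke Lemma \ref{4}. First I would start from \eqref{7}, namely $\left[\begin{matrix} |T| & T^* \\ T & |T| \end{matrix}\right]\ge O$, which holds since $T$ is semi-hyponormal. Applying Lemma \ref{1} to this block matrix gives the companion positivity $\left[\begin{matrix} |T| & T \\ T^* & |T| \end{matrix}\right]\ge O$. Adding these two positive block operators yields
\[
\left[\begin{matrix} 2|T| & T+T^* \\ T+T^* & 2|T| \end{matrix}\right]\ge O,
\]
and dividing by $2$ (or rescaling) gives $\left[\begin{matrix} |T| & \mathcal{R}T \\ \mathcal{R}T & |T| \end{matrix}\right]\ge O$, because $T+T^*=2\mathcal{R}T$.

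Now the off-diagonal block $\mathcal{R}T$ is self-adjoint, so Lemma \ref{4} applies directly with $A=B=|T|$ and $C=\mathcal{R}T$. It gives $\pm\mathcal{R}T\le |T|\sharp|T|$. Finally I would note that $|T|\sharp|T|=|T|$, since the geometric mean of a positive operator with itself is itself (directly from the formula $A\sharp A = A^{1/2}(A^{-1/2}AA^{-1/2})^{1/2}A^{1/2}=A$, with the usual limiting argument if $|T|$ is not invertible). This yields $\pm\mathcal{R}T\le|T|$, as desired.

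I do not expect a serious obstacle here; the only mild subtlety is recognizing the symmetrization step (adding the matrix from \eqref{7} to its ``transpose'' provided by Lemma \ref{1}) in order to produce a self-adjoint corner so that Lemma \ref{4} becomes applicable. One could alternatively avoid Lemma \ref{1} by working with $x,y$ interchanged in Lemma \ref{20} to get the companion inequality, but routing through Lemma \ref{1} is cleaner. An entirely parallel argument will later be available for the $2\mathcal{R}T\le|T|+|T^*|$ type statements once the appropriate block matrices are shown to be positive.
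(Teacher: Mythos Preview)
Your proposal is correct and follows essentially the same route as the paper: start from Proposition~\ref{03}, use Lemma~\ref{1} to get the companion block, add the two to obtain \eqref{30}, and then apply Lemma~\ref{4}. The only cosmetic difference is that you spell out the division by $2$ and the identity $|T|\sharp|T|=|T|$, which the paper leaves implicit.
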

\begin{proof}
{\it {First proof:}} By Proposition \ref{03} and Lemma \ref{1}, we have
\begin{equation}\label{29}
\left[ \begin{matrix}
   \left| T \right| & {{T}^{*}}  \\
   T & \left| T \right|  \\
\end{matrix} \right]\ge O\text{ and }\left[ \begin{matrix}
   \left| T \right| & T  \\
   {{T}^{*}} & \left| T \right|  \\
\end{matrix} \right]\ge O.
\end{equation}
Therefore, by adding the latter two matrix operators,
\begin{equation}\label{30}
\left[ \begin{matrix}
   \left| T \right| & \mathcal RT  \\
   \mathcal RT & \left| T \right|  \\
\end{matrix} \right]\ge O.
\end{equation}
This implies,
	\[\pm\mathcal RT\le \left| T \right|\]
thanks to Lemma \ref{4}, which completes the proof.

{\it{Second proof:}} Using \cite[Theorem 3]{3}, we have
$$|\left<Tx,x\right>|\leq \left<|T|x,x\right>,\;\forall x\in\mathcal{H}.$$ 
Thus we have
$$\left<\pm\mathcal{R}T x,x\right>=\pm\mathcal{R}\left<Tx,x\right>\leq |\left<Tx,x\right>|\leq \left<|T|x,x\right>,\;\forall x\in\mathcal{H}.$$
Hence, $\pm\mathcal{R}T\leq |T|.$
\end{proof}

We remind the reader that for a general $T$, the inequality $\pm\mathcal{R}T\leq |T|$ is not true, explaining the significance of Theorem \ref{3} for a particular class of operators.

Semi-hyponormal operators satisfy further interesting inequalities. In the next result, we present a singular-value inequality satisfied by these operators. To see the significance of this result, we notice first that an arbitrty matrix $T\in\mathcal{M}_n$ does not satisfy the arithmetic-geometric mean inequality
\[{{s}_{j}}\left( \mathcal RT \right)\le \frac{1}{2}{{s}_{j}}\left( \left| T \right|\oplus \left| {{T}^{^{*}}} \right| \right).\] 
However, semi-hyponormal matrices satisfy the following.
	\begin{corollary}\label{cor_sing}
	Let $T\in\mathcal{M}_n$ be semi-hyponormal. Then 
	
	\[{{s}_{j}}\left( \mathcal RT \right)\le {{s}_{j}}\left( \left| T \right|\oplus \left| T \right| \right)\]
for $j=1,2,\ldots ,n$.
	\end{corollary}
	
	\begin{proof}
	This follows from \eqref{30}, with Lemma \ref{0}.
	\end{proof}
We remark that Corollary \ref{cor_sing} was shown already in \cite{hirz1}.

Now we move to the study of the other interesting class, namely  $(\alpha,\beta)$-normal operators.
\begin{theorem}
Let $T\in \mathcal B\left( \mathcal H \right)$ and $0< \alpha \le 1\le \beta $. Then the following are equivalent:
\begin{itemize}
\item[(i)] $$\left[ \begin{matrix}
   \frac{1}{{{\alpha }^{2}}}{{\left| {{T}^{*}} \right|}^{2}} & {{\left| T \right|}^{2}}  \\
   {{\left| T \right|}^{2}} & \frac{1}{{{\alpha }^{2}}}{{\left| {{T}^{*}} \right|}^{2}}  \\
\end{matrix} \right]\ge O\text{ and }\left[ \begin{matrix}
   {{\beta }^{2}}{{\left| T \right|}^{2}} & {{\left| {{T}^{*}} \right|}^{2}}  \\
   {{\left| {{T}^{*}} \right|}^{2}} & {{\beta }^{2}}{{\left| T \right|}^{2}}  \\
\end{matrix} \right]\ge O.$$
\item[(ii)] 
\begin{equation}\label{021}
{{\alpha }^{2}}{{\left| T \right|}^{2}}\le {{\left| {{T}^{*}} \right|}^{2}}\le {{\beta }^{2}}{{\left| T \right|}^{2}}.
\end{equation}
\end{itemize}

\end{theorem}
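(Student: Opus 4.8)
The plan is to exploit the fact that both block matrices in (i) have the special symmetric shape $\left[ \begin{matrix} X & Y \\ Y & X \end{matrix} \right]$ with $X,Y$ self-adjoint, and to diagonalize this shape once and for all by a fixed unitary congruence. Concretely, the self-adjoint unitary $U=\frac{1}{\sqrt 2}\left[ \begin{matrix} I & I \\ I & -I \end{matrix} \right]$ satisfies $U=U^{*}=U^{-1}$ and
\[
U\left[ \begin{matrix} X & Y \\ Y & X \end{matrix} \right]U=\left[ \begin{matrix} X+Y & O \\ O & X-Y \end{matrix} \right],
\]
so that $\left[ \begin{matrix} X & Y \\ Y & X \end{matrix} \right]\ge O$ if and only if $X+Y\ge O$ and $X-Y\ge O$. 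This reduces each positivity statement in (i) to a pair of one-sided L\"owner inequalities.

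First I would apply this observation to the first matrix in (i), with $X=\frac{1}{\alpha^{2}}\left|T^{*}\right|^{2}$ and $Y=\left|T\right|^{2}$. Here $X+Y=\frac{1}{\alpha^{2}}\left|T^{*}\right|^{2}+\left|T\right|^{2}\ge O$ holds automatically as a sum of two positive operators, so the first matrix is positive precisely when $X-Y=\frac{1}{\alpha^{2}}\left|T^{*}\right|^{2}-\left|T\right|^{2}\ge O$, that is, $\alpha^{2}\left|T\right|^{2}\le\left|T^{*}\right|^{2}$. Applying the same observation to the second matrix with $X=\beta^{2}\left|T\right|^{2}$ and $Y=\left|T^{*}\right|^{2}$ shows, in the same way, that it is positive precisely when $\left|T^{*}\right|^{2}\le\beta^{2}\left|T\right|^{2}$. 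Conjoining the two equivalences yields exactly the chain \eqref{021}, which proves (i)$\Leftrightarrow$(ii).

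There is essentially no hard step here: the whole argument is one unitary congruence followed by the trivial fact that the sum of two positive operators is positive. Two small points deserve a remark. First, the quantity $\frac{1}{\alpha^{2}}$ appearing in (i) tacitly requires $\alpha>0$, so the statement should be read with $0<\alpha\le 1\le\beta$. Second, the implication (i)$\Rightarrow$(ii) can alternatively be extracted from Lemma \ref{4}: since $A\sharp A=A$, positivity of $\left[ \begin{matrix} A & C \\ C & A \end{matrix} \right]$ with $C$ self-adjoint already forces $\pm C\le A$. I would nonetheless present the congruence argument, since it delivers both implications simultaneously and does not require $\left|T\right|^{2}$ or $\left|T^{*}\right|^{2}$ to be invertible.
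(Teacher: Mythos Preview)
Your proof is correct and takes a genuinely different route from the paper's. The paper handles the two implications separately: for $(ii)\Rightarrow(i)$ it multiplies $\left|T^{*}\right|^{2}\le\beta^{2}\left|T\right|^{2}$ by $\left|T\right|^{-1}$ on each side, squares, and then invokes the Schur-complement criterion (Lemma~\ref{010}) to deduce positivity of the second block; for $(i)\Rightarrow(ii)$ it simply appeals to Lemma~\ref{4}, using $A\sharp A=A$. Your argument instead diagonalizes the symmetric block $\left[\begin{smallmatrix}X&Y\\Y&X\end{smallmatrix}\right]$ once via the fixed unitary $U=\tfrac{1}{\sqrt{2}}\left[\begin{smallmatrix}I&I\\I&-I\end{smallmatrix}\right]$, reducing both implications to the single equivalence $\left[\begin{smallmatrix}X&Y\\Y&X\end{smallmatrix}\right]\ge O\Leftrightarrow \pm Y\le X$. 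This is more elementary (no geometric mean, no Schur complement) and, as you point out, avoids the invertibility of $\left|T\right|$ that the paper's computation of $\left|T\right|^{-1}$ tacitly assumes. The paper's approach, on the other hand, illustrates how the same conclusion fits into the framework of Lemmas~\ref{4} and~\ref{010} used throughout the article. Your remark that one must read the statement with $\alpha>0$ is apt and applies equally to the paper's formulation.
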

\begin{proof}
(ii)$\Rightarrow$(i) We show that if $\alpha^2|T|^2\leq |T^*|^2\leq\beta|T|^2$, then  
$$\left[ \begin{matrix}
   \frac{1}{{{\alpha }^{2}}}{{\left| {{T}^{*}} \right|}^{2}} & {{\left| T \right|}^{2}}  \\
   {{\left| T \right|}^{2}} & \frac{1}{{{\alpha }^{2}}}{{\left| {{T}^{*}} \right|}^{2}}  \\
\end{matrix} \right]\ge O.$$ By Lemma \ref{20}, it suffices to show that 
\begin{equation}\label{ned_pos}
\left|\left<|T|^2x,y\right>\right|^2\leq \left<\frac{1}{\alpha^2}|T^*|^2x,x\right>\left<\frac{1}{\alpha^2}|T^*|^2y,y\right>
\end{equation}
for all $x,y\in\mathcal{H}.$ By the mixed Cauchy-Schwarz inequality \cite{7}, we have, for $x,y\in\mathcal{H}$,
\begin{align*}
\left|\left<|T|^2x,y\right>\right|^2&\leq \left<|T|^2x,x\right>\left<|T|^2y,y\right>\\
&\leq \left<\frac{1}{\alpha^2}|T^*|^2x,x\right>\left<\frac{1}{\alpha^2}|T^*|^2y,y\right>
\end{align*}
where we have used the assumption $|T|^2\leq\frac{1}{\alpha^2}|T^*|^2$ to obtain the last inequality. This proves \eqref{ned_pos}, which is equivalent to $\left[ \begin{matrix}
   \frac{1}{{{\alpha }^{2}}}{{\left| {{T}^{*}} \right|}^{2}} & {{\left| T \right|}^{2}}  \\
   {{\left| T \right|}^{2}} & \frac{1}{{{\alpha }^{2}}}{{\left| {{T}^{*}} \right|}^{2}}  \\
\end{matrix} \right]\ge O.$ Following the same idea, we can show the positivity of the other matrix in (i).

(i)$\Rightarrow$(ii) Suppose that $\left[ \begin{matrix}
   \frac{1}{{{\alpha }^{2}}}{{\left| {{T}^{*}} \right|}^{2}} & {{\left| T \right|}^{2}}  \\
   {{\left| T \right|}^{2}} & \frac{1}{{{\alpha }^{2}}}{{\left| {{T}^{*}} \right|}^{2}}  \\
\end{matrix} \right]\ge O$. By Lemma \ref{20}, we get \eqref{ned_pos}. Hence $\left\langle {{\left| T \right|}^{2}}x,x \right\rangle \le \left\langle \frac{1}{{{\alpha }^{2}}}{{\left| {{T}^{*}} \right|}^{2}}x,x \right\rangle $, for all $x\in \mathcal H$. This proves ${{\alpha }^{2}}{{\left| T \right|}^{2}}\le {{\left| {{T}^{*}} \right|}^{2}}$. Following the same idea we show ${{\left| {{T}^{*}} \right|}^{2}}\le {{\beta }^{2}}{{\left| T \right|}^{2}}$.
\end{proof}

In Theorem \ref{3}, we showed a possible relation between the real part and the semi-hyponormal operator. In the following, we give the analogous result for $(\alpha,\beta)$-normal operator.

\begin{theorem}\label{28}
Let $0<\alpha\leq 1\leq \beta$ and let $T$ be a $(\alpha,\beta)$-normal operator. Then 
\[\pm \mathcal RT\le \sqrt[4]{\frac{\beta }{\alpha }}\left| T \right|\sharp \left| {{T}^{*}} \right|.\]
\end{theorem}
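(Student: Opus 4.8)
The plan is to construct a positive $2\times2$ block operator of the form $\left[\begin{matrix} c\,(|T|\sharp|T^*|) & \mathcal{R}T \\ \mathcal{R}T & c\,(|T|\sharp|T^*|)\end{matrix}\right]\ge O$ with $c=\sqrt[4]{\beta/\alpha}$, and then read off the desired inequality from Lemma \ref{4}, using that $\mathcal{R}T$ is self-adjoint and $A\sharp A=A$ for $A\ge O$.

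First I would recall that the mixed Schwarz inequality $|\langle Tx,y\rangle|^2\le\langle|T|x,x\rangle\langle|T^*|y,y\rangle$, read through Lemma \ref{20}, gives $\left[\begin{matrix}|T| & T^* \\ T & |T^*|\end{matrix}\right]\ge O$; applying Lemma \ref{1} (or the mixed Schwarz inequality to $T^*$, using $|(T^*)^*|=|T|$) yields also $\left[\begin{matrix}|T^*| & T \\ T^* & |T|\end{matrix}\right]\ge O$. These two positive block operators already have off-diagonal entries $T$ and $T^*$, the ingredients of $\mathcal{R}T=\frac12(T+T^*)$.

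The $(\alpha,\beta)$-normality enters as follows. Taking square roots in \eqref{21} — legitimate because $t\mapsto t^{1/2}$ is operator monotone — gives $\alpha|T|\le|T^*|\le\beta|T|$. Then, using monotonicity of the operator geometric mean in each variable together with the identity $A\sharp(\lambda A)=\sqrt\lambda\,A$, one obtains $|T|\le\frac{1}{\sqrt\alpha}\,G$ and $|T^*|\le\sqrt\beta\,G$, where $G:=|T|\sharp|T^*|$. Enlarging the diagonal blocks of the two positive block operators above accordingly (which preserves positivity) produces $\left[\begin{matrix}\frac1{\sqrt\alpha}G & T^* \\ T & \sqrt\beta\,G\end{matrix}\right]\ge O$ and $\left[\begin{matrix}\sqrt\beta\,G & T \\ T^* & \frac1{\sqrt\alpha}G\end{matrix}\right]\ge O$. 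A diagonal congruence by $\mathrm{diag}\bigl((\alpha\beta)^{1/8},(\alpha\beta)^{-1/8}\bigr)$ applied to the first, and by the reversed pair to the second, rebalances both diagonal blocks to the common value $(\alpha\beta)^{-1/4}\sqrt\beta\,G=\sqrt[4]{\beta/\alpha}\,G$ while keeping the off-diagonal entries equal to $T^*$ and $T$ (resp.\ $T$ and $T^*$); adding the two resulting inequalities and dividing by $2$ gives $\left[\begin{matrix}\sqrt[4]{\beta/\alpha}\,G & \mathcal{R}T \\ \mathcal{R}T & \sqrt[4]{\beta/\alpha}\,G\end{matrix}\right]\ge O$, since $T+T^*=2\mathcal{R}T$. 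Lemma \ref{4} then yields $\pm\mathcal{R}T\le\sqrt[4]{\beta/\alpha}\,(|T|\sharp|T^*|)$.

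I expect the step requiring most care to be the passage from \eqref{21} to $|T|\le\frac1{\sqrt\alpha}G$ and $|T^*|\le\sqrt\beta\,G$: one cannot simply cancel squares, and must invoke operator monotonicity of the square root and then monotonicity of the geometric mean in both variables, neither of which is elementary; a minor additional point is the case where $|T|$ fails to be invertible, dispatched by the usual limiting argument ($|T|+\varepsilon$, $\varepsilon\downarrow0$) in the definition of $\sharp$. The exponent bookkeeping in the two congruences is routine once one notices that the target constant is exactly $(\alpha\beta)^{-1/4}\sqrt\beta$.
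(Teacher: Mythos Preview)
Your argument is correct and reaches the same positive block
\[
\left[\begin{matrix}\sqrt[4]{\beta/\alpha}\,(|T|\sharp|T^*|) & T^* \\ T & \sqrt[4]{\beta/\alpha}\,(|T|\sharp|T^*|)\end{matrix}\right]\ge O
\]
that the paper labels \eqref{27}, but by a genuinely different route. The paper first inserts the $(\alpha,\beta)$-normality into the mixed Schwarz inequality at the inner-product level, obtaining two \emph{symmetric} positive blocks with diagonal $\frac{1}{\sqrt\alpha}|T^*|$ and $\sqrt\beta\,|T|$ respectively (both entries equal on each diagonal), and then invokes Ando's block geometric-mean lemma (Lemma~\ref{16}) once to produce \eqref{27} directly. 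You instead keep the \emph{asymmetric} mixed-Schwarz block $\left[\begin{smallmatrix}|T| & T^*\\ T & |T^*|\end{smallmatrix}\right]$, use monotonicity of $\sharp$ to bound each diagonal entry by a scalar multiple of $G=|T|\sharp|T^*|$, and then equalize the two diagonal entries by a diagonal congruence. Your path avoids Lemma~\ref{16} entirely, trading it for the (also nontrivial) monotonicity of the geometric mean plus a small exponent calculation; the paper's path is one clean application of Lemma~\ref{16} but needs that lemma as input. For the final step the paper passes back through Lemma~\ref{20} and the scalar inequality $\pm\mathcal R\langle Tx,x\rangle\le|\langle Tx,x\rangle|$, whereas you add the block and its transpose and apply Lemma~\ref{4}; these are equivalent maneuvers. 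Your cautionary remarks about operator monotonicity of $\sqrt{\cdot}$ and the $\varepsilon$-regularization for $\sharp$ are well placed and match what the paper implicitly uses.
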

\begin{proof}
Let $T$ be a $(\alpha,\beta)$-normal operator and let $x,y \in \mathcal H$ be any vectors. Recall that the function $f\left( x \right)=\sqrt{x}$ is operator monotone on $\left[ 0,\infty  \right);$ \cite[Theorem 1.5.9 ]{2}. Therefore, by the mixed Schwarz inequality and the property of $(\alpha,\beta)$-normal operator, we have
\begin{equation}\label{01}
{{\left| \left\langle Tx,y \right\rangle  \right|}^{2}}\le \left\langle \left| T \right|x,x \right\rangle \left\langle \left| {{T}^{*}} \right|y,y \right\rangle \le \frac{1}{\alpha }\left\langle \left| {{T}^{*}} \right|x,x \right\rangle \left\langle \left| {{T}^{*}} \right|y,y \right\rangle
\end{equation}
and
\begin{equation}\label{02}
{{\left| \left\langle Tx,y \right\rangle  \right|}^{2}}\le \left\langle \left| T \right|x,x \right\rangle \left\langle \left| {{T}^{*}} \right|y,y \right\rangle \le \beta \left\langle \left| T \right|x,x \right\rangle \left\langle \left| T \right|y,y \right\rangle.
\end{equation}
Applying Lemma \ref{20} gives 
\begin{equation}\label{17}
\left[ \begin{matrix}
   \frac{1}{\sqrt{\alpha }}\left| {{T}^{*}} \right| & {{T}^{*}}  \\
   T & \frac{1}{\sqrt{\alpha }}\left| {{T}^{*}} \right|  \\
\end{matrix} \right]\ge O\quad{\text{and}}\quad\left[ \begin{matrix}
   \sqrt{\beta }\left| T \right| & {{T}^{*}}  \\
   T & \sqrt{\beta }\left| T \right|  \\
\end{matrix} \right]\ge O.
\end{equation}
It follows from \eqref{17} and Lemma \ref{16} that if $T$ is a $(\alpha,\beta)$-normal operator, then
\begin{equation}\label{27}
\left[ \begin{matrix}
   \sqrt[4]{\frac{\beta }{\alpha }}\left( \left| {{T}^{*}} \right|\sharp \left| T \right| \right) & {{T}^{*}}  \\
   T & \sqrt[4]{\frac{\beta }{\alpha }}\left( \left| {{T}^{*}} \right|\sharp \left| T \right| \right)  \\
\end{matrix} \right]\ge O.
\end{equation}
Because of Lemma \ref{20}, the above inequality has the following equivalent form
\begin{equation*}
{{\left| \left\langle Tx,y \right\rangle  \right|}^{2}}\le \sqrt{\frac{\beta }{\alpha }}\left\langle \left| T \right|\sharp \left| {{T}^{*}} \right|x,x \right\rangle \left\langle \left| T \right|\sharp \left| {{T}^{*}} \right|y,y \right\rangle,
\end{equation*}
for any vectors $x,y \in \mathcal H$. In particular,
\[\left| \left\langle Tx,x \right\rangle  \right|\le \sqrt[4]{\frac{\beta }{\alpha }}\left\langle \left| T \right|\sharp \left| {{T}^{*}} \right|x,x \right\rangle,\;\forall x\in\mathcal{H}.\]
Since
\begin{equation}\label{04}
\left\langle \pm \mathcal RTx,x \right\rangle =\pm \mathcal R\left\langle Tx,x \right\rangle \le \left| \left\langle Tx,x \right\rangle  \right|
\end{equation}
for any vectors $x \in \mathcal H$, it follows that 
\[\pm \mathcal RT\le \sqrt[4]{\frac{\beta }{\alpha }}\left| T \right|\sharp \left| {{T}^{*}} \right|\]
as desired.
\end{proof}

\begin{corollary}
Let $T\in\mathcal{M}_n$ be $(\alpha,\beta)$-normal. Then for $j=1,2,\cdots,n$, 
\[{{s}_{j}}\left( T \right)\le \sqrt[4]{\frac{\beta }{\alpha }}{{s}_{j}}\left( \left(\left| T \right|\sharp \left| {{T}^{*}} \right|\right)\oplus  \left(\left| T \right|\sharp \left| {{T}^{*}} \right|\right)\right).\] In particular, if $\|\cdot\|_u$ is an arbitrary unitarily invariant norm on $\mathcal{M}_n$, then 
\[{{\left\| T \right\|}_{u}}\le \sqrt[4]{\frac{\beta }{\alpha }}{{\left\|\; \left| T \right|\sharp \left| {{T}^{*}} \right|\; \right\|}_{u}}.\]
\end{corollary}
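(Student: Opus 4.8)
The plan is to read the singular-value inequality off the positive block matrix \eqref{27} produced in the proof of Theorem \ref{28}, and then to obtain the unitarily invariant norm estimate by the Fan dominance principle, so that the corollary is largely a harvesting step.

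\textbf{Step 1 (singular values).} Since the geometric mean is symmetric, $|T^{*}|\sharp|T| = |T|\sharp|T^{*}|$, so \eqref{27} is exactly of the form required by Lemma \ref{0}, with off-diagonal block $C=T$ and with both diagonal blocks equal to $D:=\sqrt[4]{\frac{\beta}{\alpha}}\bigl(|T|\sharp|T^{*}|\bigr)\ge O$. Hence Lemma \ref{0} gives $s_{j}(T)\le s_{j}(D\oplus D)$ for $j=1,\dots,n$; pulling the positive scalar out of the singular values (homogeneity) and reducing the direct sum to a single summand then yields
\[
s_{j}(T)\le\sqrt[4]{\frac{\beta}{\alpha}}\,s_{j}\bigl(|T|\sharp|T^{*}|\bigr),\qquad j=1,\dots,n.
\]

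\textbf{Step 2 (norms).} For any unitarily invariant norm $\|\cdot\|_{u}$ on $\mathcal{M}_{n}$, the Fan dominance theorem turns the pointwise domination of Step 1 into $\|T\|_{u}\le\|D\|_{u}$, and the homogeneity of $\|\cdot\|_{u}$ extracts the scalar, giving $\|T\|_{u}\le\sqrt[4]{\frac{\beta}{\alpha}}\,\bigl\|\,|T|\sharp|T^{*}|\,\bigr\|_{u}$.

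\textbf{Main obstacle.} The delicate point is the last move in Step 1: $D\oplus D$ lives in $\mathcal{M}_{2n}$, so one must check that $s_{j}(D\oplus D)$ can legitimately be replaced by $s_{j}(D)$ in the range $1\le j\le n$ — exactly the indexing subtlety that kept the earlier semi-hyponormal corollary in the form $s_{j}(T)\le s_{j}(|T|\oplus|T|)$. To reach the stated bound cleanly, I would instead argue directly from $(\alpha,\beta)$-normality. Applying the operator monotone map $\sqrt{\cdot}$ to \eqref{21} gives $\alpha|T|\le|T^{*}|\le\beta|T|$; then, by monotonicity of the geometric mean and the identity $X\sharp(\mu X)=\sqrt{\mu}\,X$ for $\mu>0$,
\[
|T|\sharp|T^{*}|\ \ge\ \sqrt{\alpha}\,|T|
\qquad\text{and}\qquad
|T|\sharp|T^{*}|\ \ge\ \frac{1}{\sqrt{\beta}}\,|T^{*}|.
\]
Using $s_{j}(T)=s_{j}(|T|)=s_{j}(|T^{*}|)$ together with Weyl's monotonicity of singular values on positive matrices, the two displays give $s_{j}\bigl(|T|\sharp|T^{*}|\bigr)\ge\max\!\bigl(\sqrt{\alpha},\tfrac{1}{\sqrt{\beta}}\bigr)s_{j}(T)\ge\sqrt[4]{\frac{\alpha}{\beta}}\,s_{j}(T)$, i.e.\ $s_{j}(T)\le\sqrt[4]{\frac{\beta}{\alpha}}\,s_{j}\bigl(|T|\sharp|T^{*}|\bigr)$, after which Step 2 finishes the proof as before.
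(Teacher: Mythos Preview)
Your Steps 1--2 accurately reproduce the paper's own one-line argument (apply Lemma \ref{0} to \eqref{27}), and you are right to flag the reduction from $s_j(D\oplus D)$ to $s_j(D)$ as a genuine gap: since $s_j(D\oplus D)=s_{\lceil j/2\rceil}(D)\ge s_j(D)$, the desired inequality does \emph{not} follow from $s_j(T)\le s_j(D\oplus D)$, and the paper (unlike in the earlier semi-hyponormal corollary, where it kept the direct-sum form) simply glosses over this point. Your alternative route via monotonicity of the geometric mean is correct and in fact sharper: from $\alpha|T|\le|T^*|\le\beta|T|$ you obtain $|T|\sharp|T^*|\ge\sqrt{\alpha}\,|T|$ and $|T|\sharp|T^*|\ge\beta^{-1/2}|T^*|$, hence $s_j(T)\le\min\bigl(\alpha^{-1/2},\beta^{1/2}\bigr)\,s_j\bigl(|T|\sharp|T^*|\bigr)$, of which the stated bound with constant $(\beta/\alpha)^{1/4}$ is merely a weakening. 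So your proposal is correct, and it actually repairs (and strengthens) the paper's proof by replacing the block-matrix step with a direct monotonicity argument.
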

\begin{proof}
This follows from  \eqref{27} and Lemma \ref{0}.
\end{proof}

The following is a simpler relation between $\mathcal{R}T, |T|$ and $|T^*|$ for $(\alpha,\beta)$-normal operators.
\begin{theorem}\label{thm_real_del}
Let $0<\alpha\leq 1\leq \beta$ and let $T$ be a $(\alpha,\beta)$-normal operator. Then 
\[\pm \mathcal RT\le \sqrt{\beta }\left| T \right|\;{\text{and}}\; \pm\mathcal{R}T\leq \frac{1}{\sqrt{\alpha }}\left| {{T}^{*}} \right|.\]

\end{theorem}
\begin{proof}
It follows from \eqref{01}, that
\[\left| \left\langle Tx,x \right\rangle  \right|\le \frac{1}{\sqrt{\alpha }}\left\langle \left| {{T}^{*}} \right|x,x \right\rangle\]
for any vector $x\in \mathcal H$. Combining it with \eqref{04}, gives
\[\pm \mathcal RT\le \frac{1}{\sqrt{\alpha }}\left| {{T}^{*}} \right|.\]
In the same way, from \eqref{02}, we get
\[\pm \mathcal RT\le \sqrt{\beta }\left| T \right|.\]
This completes the proof.
\end{proof}

While $|\mathcal{R}T|$, in general, is not comparable with $|T|$ nor $|T^*|$, the following is an explicit comparison for $(\alpha,\beta)$-normal operators. We point out here that Theorem \ref{thm_real_del} deals with $\mathcal{R}T$, while Theorem \ref{15} deals with $|\mathcal{R}T|.$
\begin{theorem}\label{15}
Let $0<\alpha\leq 1\leq\beta$ and let $T$ be a $(\alpha,\beta)$-normal operator. Then
\[\left| \mathcal RT \right|\le \sqrt{\frac{1+{{\alpha }^{2}}}{2{{\alpha }^{2}}}}\left| {{T}^{*}} \right|\text{ and }\left| \mathcal RT \right|\le \sqrt{\frac{1+{{\beta }^{2}}}{2}}\left| T \right|.\]
\end{theorem}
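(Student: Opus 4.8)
The plan is to reduce the assertion to a single \emph{squared} operator inequality and then pass to square roots by operator monotonicity. First I would record the elementary identity
\[
(T+T^*)^*(T+T^*)+(T-T^*)^*(T-T^*)=2\left(|T|^2+|T^*|^2\right),
\]
obtained by expanding the two products and using $T^*T=|T|^2$ and $TT^*=|T^*|^2$. Since $(T-T^*)^*(T-T^*)\ge O$ and $\mathcal{R}T=\tfrac12(T+T^*)$ is self-adjoint, this identity yields at once
\[
|\mathcal{R}T|^2=(\mathcal{R}T)^2=\tfrac14(T+T^*)^*(T+T^*)\le \tfrac12\left(|T|^2+|T^*|^2\right).
\]
This inequality holds for \emph{every} $T\in\mathcal B(\mathcal H)$ and is the technical heart of the argument.

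Next I would bring in the $(\alpha,\beta)$-normality hypothesis \eqref{21}. From $|T^*|^2\le \beta^2|T|^2$ we get $|T|^2+|T^*|^2\le (1+\beta^2)|T|^2$, hence $|\mathcal{R}T|^2\le \tfrac{1+\beta^2}{2}|T|^2$. Symmetrically, $\alpha^2|T|^2\le |T^*|^2$ rearranges to $|T|^2\le \alpha^{-2}|T^*|^2$, so that $|T|^2+|T^*|^2\le \tfrac{1+\alpha^2}{\alpha^2}|T^*|^2$ and therefore $|\mathcal{R}T|^2\le \tfrac{1+\alpha^2}{2\alpha^2}|T^*|^2$. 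Finally, since $t\mapsto\sqrt{t}$ is operator monotone on $[0,\infty)$ (as already used in the proof of Theorem~\ref{28}, \cite[Theorem 1.5.9]{2}), applying it to these two inequalities—and noting $\sqrt{|\mathcal{R}T|^2}=|\mathcal{R}T|$ together with $\sqrt{c\,|T|^2}=\sqrt{c}\,|T|$ for a scalar $c\ge 0$—gives exactly $|\mathcal{R}T|\le \sqrt{\tfrac{1+\beta^2}{2}}\,|T|$ and $|\mathcal{R}T|\le \sqrt{\tfrac{1+\alpha^2}{2\alpha^2}}\,|T^*|$.

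I expect the only genuinely delicate point to be the necessity of going through the squared inequality: one cannot obtain the claim by simply ``taking square roots'' of a statement of the form $\pm\mathcal{R}T\le (\text{positive operator})$ such as those furnished by the previous theorems, because $-B\le A\le B$ with $A$ self-adjoint does \emph{not} imply $|A|\le B$ in general (a $2\times2$ counterexample is easy to exhibit). Passing instead through $|\mathcal{R}T|^2\le \tfrac12(|T|^2+|T^*|^2)$ makes the square-root step legitimate; everything else is order-preserving bookkeeping with \eqref{21}.
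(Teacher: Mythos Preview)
Your proof is correct and follows essentially the same route as the paper. Both arguments boil down to the identity $(\mathcal{R}T)^2+(\mathcal{I}T)^2=\tfrac12(|T|^2+|T^*|^2)$ (you obtain it cleanly via the parallelogram-type expansion; the paper reaches it through $(\mathcal{R}T)^2=\tfrac14(|T|^2+|T^*|^2+2\mathcal{R}T^2)$ together with $(\mathcal{R}T)^2-\mathcal{R}T^2=(\mathcal{I}T)^2$), then drop $(\mathcal{I}T)^2\ge O$, invoke \eqref{21}, and take operator square roots.
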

\begin{proof}
According to the assumption
\[\begin{aligned}
   {{\left( \mathcal RT \right)}^{2}}&=\frac{1}{4}\left( {{\left| T \right|}^{2}}+{{\left| {{T}^{*}} \right|}^{2}}+2\mathcal R{{T}^{2}} \right) \\ 
 & \le \frac{1}{4}\left( {{\left| T \right|}^{2}}+{{\beta }^{2}}{{\left| T \right|}^{2}}+2\mathcal R{{T}^{2}} \right) \\ 
 & =\frac{1}{4}\left( \left( 1+{{\beta }^{2}} \right){{\left| T \right|}^{2}}+2\mathcal R{{T}^{2}} \right).  
\end{aligned}\]
The inequality stated above shows
\[\begin{aligned}
   {{\left( \mathcal RT \right)}^{2}}&\le \left( \frac{1+{{\beta }^{2}}}{2} \right){{\left| T \right|}^{2}}-{{\left( \mathcal IT \right)}^{2}} \\ 
 & \le \left( \frac{1+{{\beta }^{2}}}{2} \right){{\left| T \right|}^{2}}  
\end{aligned}\]
thanks to
\[{{\left( \mathcal RT \right)}^{2}}-\mathcal R{{T}^{2}}={{\left( \mathcal IT \right)}^{2}}.\] Then the second inequality follows by the
operator monotonicity of $f(x)=\sqrt{x}.$\\
The other inequality can be obtained by the same way.
\end{proof}
The following is an example explaining Theorem \ref{15}.
\begin{example}
Take $T=\left[ \begin{matrix}
   1 & 0  \\
   1 & 1  \\
\end{matrix} \right]$. Then $T$ will be $\left( \alpha ,\beta  \right)$-normal operator with ${{\alpha }^{2}}=\frac{3-\sqrt{5}}{2}$ and ${{\beta }^{2}}=\frac{3+\sqrt{5}}{2}$. Then 
$$\left| \mathcal RT \right|=\left[ \begin{matrix}
   1 & 0.5  \\
   0.5 & 1  \\
\end{matrix} \right]\lneqq	 \sqrt{\frac{1+{{\beta }^{2}}}{2}}\left| T \right|\approx \left[ \begin{matrix}
   1.8043 & 0.6014  \\
   0.6014 & 1.2028  \\
\end{matrix} \right].$$
The left term in the above inequality is also strictly less than
\[\sqrt{\frac{1+{{\alpha }^{2}}}{2{{\alpha }^{2}}}}\left| {{T}^{*}} \right|\approx \left[ \begin{matrix}
   1.2028 & 0.6014  \\
   0.6014 & 1.8043  \\
\end{matrix} \right].\]
\end{example}

\subsection{Two-operator inequalities}
In this section, we present various results treating two Hilbert space operators, continuing with the same theme of the paper. First, we have the following positive matrix operator.
\begin{lemma}\label{6}
Let $S,T\in \mathcal B\left( \mathcal H \right)$. If $f,g$ are non-negative continuous functions on $\left[ 0,\infty  \right)$ satisfying $f\left( t \right)g\left( t \right)=t$, $\left( t\ge 0 \right)$, then  
\[\left[ \begin{matrix}
   {{f}^{2}}\left( \left| T \right| \right) & {{T}^{*}}{{S}^{*}}  \\
   ST & S{{g}^{2}}\left( \left| {{T}^{*}} \right| \right){{S}^{*}}  \\
\end{matrix} \right]\ge O.\]
\end{lemma}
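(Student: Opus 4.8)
The plan is to reduce the claimed block positivity to the Schwarz-type criterion of Lemma \ref{20}, exactly as was done in Proposition \ref{03} and Theorem \ref{28}. First I would record the polar decomposition $T = U|T|$, so that $|T^*| = U|T|U^*$ and, more usefully, $g^2(|T^*|) = U g^2(|T|) U^*$ for any continuous $g$ on $[0,\infty)$ by the spectral mapping / functional calculus together with $U^*U$ acting as the identity on the relevant range. Applying $S(\cdot)S^*$ one gets $S g^2(|T^*|) S^* = (SU) g^2(|T|) (SU)^*$, which is manifestly positive; likewise $f^2(|T|) \ge O$. So both diagonal entries of the asserted block matrix are positive, and Lemma \ref{20} applies: the block positivity is equivalent to
\[
{{\left| \left\langle (ST)x, y \right\rangle \right|}^{2}} \le \left\langle f^2(|T|) x, x \right\rangle \left\langle S g^2(|T^*|) S^* y, y \right\rangle
\]
for all $x,y \in \mathcal{H}$, and it is this scalar inequality I would prove.

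Next I would massage the two sides. Write $\langle (ST)x,y\rangle = \langle Tx, S^* y\rangle$ and apply the mixed Schwarz inequality \cite{7} to the operator $T$ with vectors $x$ and $S^*y$:
\[
{{\left| \left\langle Tx, S^* y \right\rangle \right|}^{2}} \le \left\langle |T| x, x \right\rangle \left\langle |T^*| S^* y, S^* y \right\rangle.
\]
The first factor is bounded by $\langle f^2(|T|) x, x\rangle$ only if $|T| \le f^2(|T|)$, which need not hold, so instead I would use the sharper form of the mixed Schwarz inequality, or better, the generalized version due to Kato: for any nonnegative continuous $f,g$ with $f(t)g(t)=t$ one has ${{\left| \left\langle Tx, z \right\rangle \right|}^{2}} \le \left\langle f^2(|T|) x, x \right\rangle \left\langle g^2(|T^*|) z, z \right\rangle$. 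Taking $z = S^* y$ gives exactly
\[
{{\left| \left\langle (ST)x, y \right\rangle \right|}^{2}} \le \left\langle f^2(|T|) x, x \right\rangle \left\langle g^2(|T^*|) S^* y, S^* y \right\rangle = \left\langle f^2(|T|) x, x \right\rangle \left\langle S g^2(|T^*|) S^* y, y \right\rangle,
\]
which is precisely the inequality needed to invoke Lemma \ref{20} and conclude.

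The one genuine obstacle is making sure the functional calculus identity $S g^2(|T^*|) S^* = (SU) g^2(|T|) (SU)^*$ is stated cleanly when $T$ (hence $U$) is only a partial isometry, and, more importantly, that the Kato / generalized mixed Schwarz inequality ${{|\langle Tx,z\rangle|}^{2}} \le \langle f^2(|T|)x,x\rangle \langle g^2(|T^*|)z,z\rangle$ is available in the form I want; this is classical (it follows from the polar decomposition and the ordinary Cauchy–Schwarz inequality applied to $\langle g(|T^*|) U f(|T|) x, z\rangle$), but I would cite it explicitly or give the one-line derivation so the reader need not reconstruct it. Everything else — positivity of the diagonal blocks, the rewriting $\langle Sg^2(|T^*|)S^* y,y\rangle = \langle g^2(|T^*|)S^*y, S^*y\rangle$, and the final appeal to Lemma \ref{20} — is routine. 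I would also remark that the choice $f(t)=g(t)=\sqrt{t}$ recovers a symmetric-looking statement and that the hypothesis $f(t)g(t)=t$ is exactly what the mixed Schwarz inequality consumes.
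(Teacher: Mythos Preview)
Your proof is correct and follows essentially the same route as the paper: rewrite $\langle STx,y\rangle=\langle Tx,S^*y\rangle$, apply the generalized mixed Schwarz (Kato--Kittaneh) inequality $|\langle Tx,z\rangle|^2\le\langle f^2(|T|)x,x\rangle\langle g^2(|T^*|)z,z\rangle$ with $z=S^*y$, and then invoke Lemma~\ref{20}. The polar-decomposition discussion of $Sg^2(|T^*|)S^*$ is unnecessary (positivity of the diagonal blocks is immediate, and Lemma~\ref{20} does the rest), but it does no harm.
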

\begin{proof}
Let $x,y\in\mathcal{H}$. It follows from \cite[Theorem 1]{3} that
\[\begin{aligned}
   \left| \left\langle STx,y \right\rangle  \right|&=\left| \left\langle Tx,{{S}^{*}}y \right\rangle  \right| \\ 
 & \le \left\| f\left( \left| T \right| \right)x \right\|\left\| g\left( \left| {{T}^{*}} \right| \right){{S}^{*}}y \right\| \\ 
 & =\sqrt{\left\langle {{f}^{2}}\left( \left| T \right| \right)x,x \right\rangle \left\langle S{{g}^{2}}\left( \left| {{T}^{*}} \right| \right){{S}^{*}}y,y \right\rangle }.
\end{aligned}\]
Now, by Lemma \ref{20}, we get the desired result.
\end{proof}

Now we are ready to present the following inequality about $\mathcal{R}(ST)$.

\begin{theorem}\label{23}
Let $S,T\in \mathcal B\left( \mathcal H \right)$. If $f,g$ are non-negative continuous functions on $\left[ 0,\infty  \right)$ satisfying $f\left( t \right)g\left( t \right)=t$, $\left( t\ge 0 \right)$, then
\begin{equation}\label{14}
\pm \mathcal R\left( ST \right)\le \frac{\left(S{{g}^{2}}\left( \left| {{T}^{*}} \right| \right){{S}^{*}}\right)\sharp {{\left| {{S}^{*}} \right|}^{2}}+f\left( \left| T \right| \right)\left| T \right|}{2}.
\end{equation}
\end{theorem}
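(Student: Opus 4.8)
The plan is to realize the right-hand side as the diagonal of a positive $2\times 2$ block operator and then invoke Lemma \ref{4}. I would start from Lemma \ref{6}, which gives
\[
\left[ \begin{matrix}
   {{f}^{2}}\left( \left| T \right| \right) & {{T}^{*}}{{S}^{*}}  \\
   ST & S{{g}^{2}}\left( \left| {{T}^{*}} \right| \right){{S}^{*}}  \\
\end{matrix} \right]\ge O .
\]
Applying Lemma \ref{1} to swap the diagonal blocks yields the companion positivity
\[
\left[ \begin{matrix}
   S{{g}^{2}}\left( \left| {{T}^{*}} \right| \right){{S}^{*}} & ST  \\
   {{T}^{*}}{{S}^{*}} & {{f}^{2}}\left( \left| T \right| \right)  \\
\end{matrix} \right]\ge O .
\]
Now I would produce a second pair of positive blocks whose off-diagonal entry is again $\mathcal R(ST)$-related but whose diagonal entries are $|S^*|^2$ and $f(|T|)|T|$. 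The natural candidate is the elementary positivity $\left[\begin{matrix} |S^*|^2 & S \\ S^* & I\end{matrix}\right]\ge O$ multiplied appropriately, or rather a direct factorization: observe that $ST = |S^*|\,U\,T$ for the partial isometry $U$ in the polar decomposition $S=|S^*|U$... this is a bit delicate, so instead I would aim to write down a positive block with diagonal $|S^*|^2$ and $f(|T|)|T|$ and off-diagonal $ST^*$ — wait, we need $ST$, not $ST^*$. Let me reconsider.

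The cleaner route: I would seek two positive block matrices
\[
M_1=\left[ \begin{matrix} S{{g}^{2}}\left( \left| {{T}^{*}} \right| \right){{S}^{*}} & ST \\ {{T}^{*}}{{S}^{*}} & {{f}^{2}}\left( \left| T \right| \right)\end{matrix}\right], \qquad
M_2=\left[ \begin{matrix} {{\left| {{S}^{*}} \right|}^{2}} & ST \\ {{T}^{*}}{{S}^{*}} & f\left( \left| T \right| \right)\left| T \right|\, f\left( \left| T \right| \right)^{-1}\cdots\end{matrix}\right]
\]
Hmm, the bookkeeping of which square-root appears is the subtle point. The honest plan is: from Lemma \ref{6} with the \emph{same} $f,g$ we already have $M_1\ge O$; for $M_2$ I would again call on the Cauchy--Schwarz form of Lemma \ref{20}, estimating $|\langle STx,y\rangle|=|\langle Tx,S^*y\rangle|$ by choosing the factorization $|\langle Tx, S^*y\rangle|\le \|\,|T|^{1/2}f(|T|)^{1/2}x\|\cdot\|\,|T|^{1/2}g(|T|)^{1/2} \cdot \text{(something involving }S^*)\|$ so as to land the diagonal entries as $f(|T|)|T|$ and $|S^*|^2$. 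Concretely, since $ST=SU^*|T^*|$ is awkward, I would instead bound $|\langle STx,y\rangle|\le \|g(|T|)|T|^{1/2} x\|\,\|f(|T|)^{-1}\cdots\|$ — the point is that $f(t)g(t)=t$ lets one split $t$ as $f(t)\cdot \frac{t}{f(t)}$ or as $\frac{t}{g(t)}\cdot g(t)$, and one of these splittings will give exactly $f(|T|)|T|$ as one diagonal block after pairing with $S^*$ on the other side to get $|S^*|^2$ there.

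Once both $M_1\ge O$ and $M_2\ge O$ are in hand, I would add them and apply Lemma \ref{1} to one of them beforehand if needed to make the off-diagonal entries match; the sum is
\[
\left[ \begin{matrix} S{{g}^{2}}\left( \left| {{T}^{*}} \right| \right){{S}^{*}}+{{\left| {{S}^{*}} \right|}^{2}} & 2\mathcal R(ST) \\ 2\mathcal R(ST) & {{f}^{2}}\left( \left| T \right| \right)+f\left( \left| T \right| \right)\left| T \right|\end{matrix}\right]\ge O ,
\]
wait — this does not match the claimed bound either, which has $S g^2 S^*\sharp|S^*|^2$ and $f(|T|)|T|$, i.e.\ a geometric mean in one slot. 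That signals that Lemma \ref{16} must be used: I would arrange \emph{two} positive blocks sharing the \emph{same} off-diagonal $ST$ (or $\mathcal R(ST)$ after the addition-of-adjoints trick), with diagonals $(Sg^2S^*, f^2(|T|))$ and $(|S^*|^2, \,|T|^2/\!\!\text{(something)})$, apply Lemma \ref{16} to take geometric means of corresponding diagonal entries, obtaining
\[
\left[ \begin{matrix} S{{g}^{2}}\left( \left| {{T}^{*}} \right| \right){{S}^{*}}\,\sharp\,{{\left| {{S}^{*}} \right|}^{2}} & ST \\ {{T}^{*}}{{S}^{*}} & {{f}^{2}}(|T|)\,\sharp\,(\cdots) \end{matrix}\right]\ge O ,
\]
choosing the second-block diagonals so that $f^2(|T|)\sharp(\cdots)=f(|T|)|T|$, which forces $(\cdots)=|T|^2$ since $f(t)\sqrt{f^{-2}(t)t^2}=f(t)\cdot\frac{t}{f(t)}=t$... rather $\sqrt{f^2(t)\cdot s}=f(t)t$ needs $s=t^2$. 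So the second block should be $\left[\begin{matrix}|S^*|^2 & ST\\ T^*S^* & |T|^2\end{matrix}\right]$, which is positive because $|\langle STx,y\rangle|=|\langle Tx,S^*y\rangle|\le \|\,|T|x\|\,\|S^*y\|\le\cdots$ — actually $|\langle Tx,S^*y\rangle|\le \|Tx\|\|S^*y\|=\langle|T|^2x,x\rangle^{1/2}\langle|S^*|^2y,y\rangle^{1/2}$, which is exactly Lemma \ref{20}. Then Lemma \ref{16} applied to $M_1$ and this block gives the block with off-diagonal $ST$ and diagonals $Sg^2(|T^*|)S^*\sharp|S^*|^2$ and $f^2(|T|)\sharp|T|^2=f(|T|)|T|$. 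Finally, combine with the adjoint version via Lemma \ref{1} and add to replace $ST$ by $\mathcal R(ST)$, then invoke Lemma \ref{4} to conclude $\pm\mathcal R(ST)\le \frac{Sg^2(|T^*|)S^*\sharp|S^*|^2+f(|T|)|T|}{2}$.

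The main obstacle I anticipate is the correct packaging of the $f,g$ split so that the geometric means collapse to the stated closed forms $f(|T|)|T|$ and $Sg^2(|T^*|)S^*\sharp|S^*|^2$; in particular verifying $f^2(|T|)\sharp|T|^2=f(|T|)|T|$ requires that $f(|T|)$ and $|T|$ commute (they do, being functions of the same operator) so that the geometric mean is just the product of square roots, and one must check this commutation carefully. A secondary technical point is handling the non-invertibility of the operators when passing between the Cauchy--Schwarz formulation of Lemma \ref{20} and the Schur-complement formulations; as is standard, this is dealt with by a limiting argument replacing $|T|,|S^*|$ by $|T|+\varepsilon I,|S^*|+\varepsilon I$ and letting $\varepsilon\to 0^+$, using continuity of the operator geometric mean.
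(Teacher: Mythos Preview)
Your proposal is correct and, after the exploratory detours, lands on exactly the argument the paper gives: establish the two positive blocks $\left[\begin{smallmatrix} Sg^2(|T^*|)S^* & ST \\ T^*S^* & f^2(|T|)\end{smallmatrix}\right]\ge O$ (Lemma~\ref{6} plus Lemma~\ref{1}) and $\left[\begin{smallmatrix} |S^*|^2 & ST \\ T^*S^* & |T|^2\end{smallmatrix}\right]\ge O$ (Cauchy--Schwarz via Lemma~\ref{20}), apply Lemma~\ref{16} to take geometric means of the diagonals, use commutativity of $f(|T|)$ and $|T|$ to simplify $f^2(|T|)\sharp|T|^2=f(|T|)|T|$, and finish by adding the adjoint block and invoking Lemma~\ref{4}. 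The only addition beyond the paper is your remark about the $\varepsilon$-perturbation for invertibility, which is indeed the standard implicit justification.
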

\begin{proof}\label{11}
Let $x,y\in\mathcal{H}$. For $S,T\in \mathcal B\left( \mathcal H \right)$, we have by the Cauchy-Schwartz inequality,
\[\begin{aligned}
   \left| \left\langle STx,y \right\rangle  \right|&=\left| \left\langle Tx,{{S}^{*}}y \right\rangle  \right| \\ 
 & \le \left\| Tx \right\|\left\| {{S}^{*}}y \right\| \\ 
 & =\sqrt{\left\langle {{\left| T \right|}^{2}}x,x \right\rangle \left\langle {{\left| {{S}^{*}} \right|}^{2}}y,y \right\rangle }
\end{aligned}\]
which implies, by Lemma \ref{1} and Lemma \ref{20},
\begin{equation}\label{9}
\left[ \begin{matrix}
   {{\left| {{S}^{*}} \right|}^{2}} & ST  \\
   {{T}^{*}}{{S}^{*}} & {{\left| T \right|}^{2}}  \\
\end{matrix} \right]\ge O.
\end{equation}
On the other hand, by Lemma \ref{6} and Lemma \ref{1}, we have 
\begin{equation}\label{8}
\left[ \begin{matrix}
   S{{g}^{2}}\left( \left| {{T}^{*}} \right| \right){{S}^{*}} & ST  \\
   {{T}^{*}}{{S}^{*}} & {{f}^{2}}\left( \left| T \right| \right)  \\
\end{matrix} \right]\ge O.
\end{equation}
The inequalities \eqref{9} and \eqref{8}, with Lemma \ref{16} give
\begin{equation}\label{13}
\left[ \begin{matrix}
   \left(S{{g}^{2}}\left( \left| {{T}^{*}} \right| \right){{S}^{*}}\right)\sharp{{\left| {{S}^{*}} \right|}^{2}} & ST  \\
   {{T}^{*}}{{S}^{*}} & {{f}^{2}}\left( \left| T \right| \right)\sharp{{\left| T \right|}^{2}}  \\
\end{matrix} \right]\ge O.
\end{equation}
Since ${{f}^{2}}\left( \left| T \right| \right){{\left| T \right|}^{2}}={{\left| T \right|}^{2}}{{f}^{2}}\left( \left| T \right| \right)$ (see \cite[Theorem 1.13]{high}), we have $f^2(|T|)\sharp|T|^2=f(|T|)|T|$. Now, noting \eqref{13}, then applying \eqref{eq_re_neg}, we get the desired result.
\end{proof}

\begin{remark}\label{18}
\hfill
\begin{itemize}
\item[(i)] Taking $S=I$, the identity operator, in Theorem \ref{23}, then we have
\[\pm\mathcal R T \le \frac{g\left( \left| {{T}^{*}} \right| \right)+{{f}}\left( \left| T \right| \right){{\left| T \right|}}}{2}.\]
\item[(ii)] If we replace $S$ by $\textup i{{S}^{*}}$, in Theorem \ref{23}, we get
\[\pm\mathcal I\left( {{T}^{*}}S \right)\le \frac{\left({{S}^{*}}g^{2}\left( \left| {{T}^{*}} \right| \right)S\right)\sharp {{\left| S \right|}^{2}}+{{f}}\left( \left| T \right| \right) {{\left| T \right|}}}{2}.\]
\end{itemize}
\end{remark}

The next result follows from Remark \ref{18} (ii) and Theorem \ref{23}.
\begin{corollary}
Let $S,T\in \mathcal B\left( \mathcal H \right)$. If $f,g$ are non-negative continuous functions on $\left[ 0,\infty  \right)$ satisfying $f\left( t \right)g\left( t \right)=t$, $\left( t\ge 0 \right)$, then
\[\pm \left( \mathcal R\left( S{{T}^{*}} \right)+\mathcal I\left( T{{S}^{*}} \right) \right)\le \left(Sg^{2}\left( \left| T \right| \right){{S}^{*}}\right)\sharp {{\left| {{S}^{*}} \right|}^{2}}+{{f}}\left( \left| {{T}^{*}} \right| \right) {{\left| {{T}^{*}} \right|}}.\]
In particular,
\[\pm \left( \mathcal R T+\mathcal I T  \right)\le g\left( \left| T \right| \right)+{{f}}\left( \left| {{T}^{*}} \right| \right) {{\left| {{T}^{*}} \right|}}.\]
\end{corollary}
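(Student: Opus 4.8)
The plan is to derive this corollary purely by substitution in, and then addition of, Theorem \ref{23} and Remark \ref{18}(ii); no new tool is needed.

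First I would apply Theorem \ref{23} with $T$ replaced by $T^{*}$. Since the constraint $f(t)g(t)=t$ only concerns values on $[0,\infty)$, the pair $(f,g)$ is untouched, and because $|(T^{*})^{*}|=|T|$ and commuting positive operators satisfy $A\sharp B=(AB)^{1/2}$, the right-hand side takes the reduced form already recorded after \eqref{13}; this gives
\[\pm\,\mathcal R(ST^{*})\le \frac{Sg^{2}(|T|)S^{*}\sharp|S^{*}|^{2}+f(|T^{*}|)|T^{*}|}{2}.\]
Next I would apply Remark \ref{18}(ii) with the simultaneous replacements $T\mapsto T^{*}$ and $S\mapsto S^{*}$. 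On the left, $\mathcal I(T^{*}S)$ turns into $\mathcal I((T^{*})^{*}S^{*})=\mathcal I(TS^{*})$; on the right, tracking $|T^{*}|\mapsto|T|$, $|T|\mapsto|T^{*}|$, $|S|\mapsto|S^{*}|$, $S^{*}\mapsto S$ and $S\mapsto S^{*}$, one obtains
\[\pm\,\mathcal I(TS^{*})\le \frac{Sg^{2}(|T|)S^{*}\sharp|S^{*}|^{2}+f(|T^{*}|)|T^{*}|}{2}.\]
The decisive point is that these two upper bounds are literally identical. Since the L\"owner order is preserved under addition, adding the two inequalities produces
\[\pm\left(\mathcal R(ST^{*})+\mathcal I(TS^{*})\right)\le Sg^{2}(|T|)S^{*}\sharp|S^{*}|^{2}+f(|T^{*}|)|T^{*}|,\]
which is the first assertion.

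For the ``in particular'' statement I would specialize to $S=I$. On the left, $\mathcal R(T^{*})=\mathcal R T$ and $\mathcal I(T)=\mathcal I T$, so it reads $\pm(\mathcal R T+\mathcal I T)$. On the right, $Sg^{2}(|T|)S^{*}\sharp|S^{*}|^{2}=g^{2}(|T|)\sharp I=(g^{2}(|T|))^{1/2}=g(|T|)$, using $A\sharp I=A^{1/2}$ together with $g\ge 0$; this reduces the bound to $g(|T|)+f(|T^{*}|)|T^{*}|$, as claimed.

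There is no genuine obstacle here: the whole argument is bookkeeping. The only places demanding attention are keeping straight, under the two different substitutions, which of $|T|,|T^{*}|,|S|,|S^{*}|$ each factor becomes, and checking that the two resulting right-hand sides coincide so that the addition step is legitimate. For the displayed expressions to make sense when the operators are not invertible one relies, as elsewhere in the paper, on the continuous extension of $\sharp$ to all positive operators, and one notes that $f(|T^{*}|)|T^{*}|=|T^{*}|f(|T^{*}|)$ since both factors are continuous functions of $|T^{*}|$.
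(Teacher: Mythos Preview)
Your proposal is correct and follows exactly the route the paper indicates: the paper merely states that the result ``follows from Remark \ref{18}(ii) and Theorem \ref{23},'' and you have supplied the expected details by substituting $T\mapsto T^{*}$ in Theorem \ref{23}, substituting $S\mapsto S^{*}$ and $T\mapsto T^{*}$ in Remark \ref{18}(ii), observing that the two right-hand sides coincide, and adding. The specialization $S=I$ together with $A\sharp I=A^{1/2}$ is the natural way to obtain the ``in particular'' clause.
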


Theorem \ref{23} can be extended to the sum of operators by using the linearity of $\mathcal R$. 
\begin{corollary}
Let ${{T}_{1}},{{T}_{2}},{{T}_{3}},{{T}_{4}}\in \mathcal B\left( \mathcal H \right)$. If $f,g$ are non-negative continuous functions on $\left[ 0,\infty  \right)$ satisfying $f\left( t \right)g\left( t \right)=t$, $\left( t\ge 0 \right)$, then 
\begin{equation}\label{2}
\begin{aligned}
  & \pm \mathcal R\left( {{T}_{1}}{{T}_{2}}\pm {{T}_{3}}{{T}_{4}} \right) \\ 
 & \le \frac{1}{2}\left( \left({{T}_{1}}{{g}^{2}}\left( \left| {{{{T}_{2}}}^{*}} \right| \right){{{{T}_{1}}}^{*}}\right)\sharp{{\left| {{{{T}_{1}}}^{*}} \right|}^{2}}+\left({{T}_{3}}{{g}^{2}}\left( \left| {{{{T}_{4}}}^{*}} \right| \right){{{{T}_{3}}}^{*}}\right)\sharp{{\left| {{{{T}_{3}}}^{*}} \right|}^{2}}+{{f}}\left( \left| {{T}_{2}} \right| \right){{\left| {{T}_{2}} \right|}}+{{f}}\left( \left| {{T}_{4}} \right| \right){{\left| {{T}_{4}} \right|}} \right).  
\end{aligned}
\end{equation}
In particular,
\begin{equation}\label{5}
\pm \mathcal R\left( {{T}_{1}}{{T}_{2}}\pm {{T}_{3}}{{T}_{4}} \right)\le \frac{1}{2}\left( \left({{T}_{1}}\left| {{{{T}_{2}}}^{*}} \right|{{{{T}_{1}}}^{*}}\right)\sharp{{\left| {{{{T}_{1}}}^{*}} \right|}^{2}}+\left({{T}_{3}}\left| {{{{T}_{4}}}^{*}} \right|{{{{T}_{3}}}^{*}}\right)\sharp{{\left| {{{{T}_{3}}}^{*}} \right|}^{2}}+{{\left| {{T}_{2}} \right|}^{\frac{3}{2}}}+{{\left| {{T}_{4}} \right|}^{\frac{3}{2}}} \right).
\end{equation}
\end{corollary}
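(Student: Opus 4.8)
The plan is to deduce the corollary from Theorem \ref{23} by applying it separately to the two products $T_1 T_2$ and $T_3 T_4$, and then adding the resulting L\"owner inequalities while exploiting the real-linearity of the map $X \mapsto \mathcal{R}X$. Concretely, I would first invoke \eqref{14} with $(S,T) = (T_1, T_2)$ to obtain
\[\pm \mathcal{R}(T_1 T_2) \le \frac{T_1 g^2(|T_2^*|) T_1^* \sharp |T_1^*|^2 + f(|T_2|)|T_2|}{2},\]
and once more with $(S,T) = (T_3, T_4)$ to obtain the analogous bound for $\pm \mathcal{R}(T_3 T_4)$. As in Theorem \ref{23}, one should note that $f(|T|)$ and $|T|$ commute, being continuous functions of the same positive operator, so that $f(|T|)|T| \ge O$ and each right-hand side is genuinely self-adjoint.

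Next, fix signs $\varepsilon_0, \varepsilon_1 \in \{+1,-1\}$. By real-linearity of $\mathcal{R}$,
\[\varepsilon_0\, \mathcal{R}(T_1 T_2 + \varepsilon_1 T_3 T_4) = \varepsilon_0\, \mathcal{R}(T_1 T_2) + (\varepsilon_0\varepsilon_1)\, \mathcal{R}(T_3 T_4).\]
Applying the first displayed inequality with the sign $\varepsilon_0$, the second with the sign $\varepsilon_0\varepsilon_1$ --- both admissible choices of the ``$\pm$'' there --- and then adding the two L\"owner inequalities, the L\"owner order being preserved under addition, yields precisely \eqref{2}. Since $\varepsilon_0, \varepsilon_1$ were arbitrary, this proves the first assertion.

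Finally, \eqref{5} is the special case $f(t) = g(t) = \sqrt{t}$, which is non-negative and continuous on $[0,\infty)$ with $f(t)g(t) = t$. With this choice $g^2(|T_j^*|) = |T_j^*|$ and $f(|T_j|)|T_j| = |T_j|^{1/2}|T_j| = |T_j|^{3/2}$ for $j = 2, 4$; substituting into \eqref{2} gives the stated inequality.

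I do not anticipate a genuine obstacle here: all the substance is already contained in Theorem \ref{23}, and what remains is sign bookkeeping together with the fact that the L\"owner partial order respects sums. The only point warranting a moment of care is checking that each of the four sign combinations in $\pm\mathcal{R}(T_1 T_2 \pm T_3 T_4)$ is covered by an admissible instance of the ``$\pm$'' in \eqref{14}, which the arrangement above handles.
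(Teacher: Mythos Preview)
Your proposal is correct and follows essentially the same approach as the paper: apply Theorem \ref{23} separately to the pairs $(T_1,T_2)$ and $(T_3,T_4)$, add the resulting L\"owner inequalities using the linearity of $\mathcal{R}$, and specialize to $f(t)=g(t)=\sqrt{t}$ for \eqref{5}. The only cosmetic difference is that the paper obtains the inner ``$-$'' sign by the substitution $T_3\mapsto \textup iT_3$, $T_4\mapsto \textup iT_4$ (which leaves the right-hand side unchanged), whereas you handle it by direct sign bookkeeping; these are clearly equivalent.
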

\begin{proof}
From Theorem \ref{23}, we have
$$\begin{aligned}
  & \pm \mathcal R\left( {{T}_{1}}{{T}_{2}}+{{T}_{3}}{{T}_{4}} \right) \\ 
 & =\pm \mathcal R\left( {{T}_{1}}{{T}_{2}} \right)\pm \mathcal R\left( {{T}_{3}}{{T}_{4}} \right) \\ 
 & \le \frac{1}{2}\left(\left( {{T}_{1}}{{g}^{2}}\left( \left| {{{{T}_{2}}}^{*}} \right| \right){{{{T}_{1}}}^{*}}\right)\sharp{{\left| {{{{T}_{1}}}^{*}} \right|}^{2}}+\left({{T}_{3}}{{g}^{2}}\left( \left| {{{{T}_{4}}}^{*}} \right| \right){{{{T}_{3}}}^{*}}\right)\sharp{{\left| {{{{T}_{3}}}^{*}} \right|}^{2}}+{{f}}\left( \left| {{T}_{2}} \right| \right){{\left| {{T}_{2}} \right|}}+{{f}}\left( \left| {{T}_{4}} \right| \right){{\left| {{T}_{4}} \right|}} \right).  
\end{aligned}$$
This proves \eqref{2}.
The inequality \eqref{5} follows from \eqref{2} by setting $f\left( x \right)=g\left( x \right)=\sqrt{x}$.
\end{proof}

\begin{corollary}\label{19}
Let $S,T\in \mathcal{M}_n$. Then, for any $0\le t',v'\le \frac{3}{2}$,
\[\pm \mathcal R\left( S\pm T \right)\le \frac{1}{2}\left( {{\left| S \right|}^{t'}}+{{\left| {{S}^{*}} \right|}^{2-t'}}+{{\left| T \right|}^{v'}}+{{\left| {{T}^{*}} \right|}^{2-v'}} \right).\]
\end{corollary}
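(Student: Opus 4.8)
The plan is to reduce the two-operator inequality to a one-operator estimate and then use the additivity of $\mathcal{R}$. First I would record, as a special case of Theorem~\ref{23}, a bound for the real part of a single operator: taking the identity $I$ in the role of $S$ (and $S$ in the role of $T$) in Theorem~\ref{23}, its right-hand side becomes $\tfrac12\bigl(g^{2}(|S^{*}|)\sharp I+f(|S|)\,|S|\bigr)$, and since $A\sharp I=A^{1/2}$ for every $A\ge O$ we have $g^{2}(|S^{*}|)\sharp I=g(|S^{*}|)$; hence
\[\pm\mathcal{R}S\le\frac{g(|S^{*}|)+f(|S|)\,|S|}{2}\]
for every pair of non-negative continuous functions $f,g$ on $[0,\infty)$ with $f(t)g(t)=t$ (cf.\ Remark~\ref{18}(i)).

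Next I would specialize the functions to powers. With $f(t)=t^{a}$ and $g(t)=t^{1-a}$, $0\le a\le 1$ — an admissible pair, being non-negative and continuous on $[0,\infty)$ with $f(t)g(t)=t$ — the functional calculus gives $f(|S|)\,|S|=|S|^{1+a}$ and $g(|S^{*}|)=|S^{*}|^{1-a}$, so
\[\pm\mathcal{R}S\le\frac{|S|^{1+a}+|S^{*}|^{1-a}}{2}.\]
Putting $t'=1+a$, this is precisely $\pm\mathcal{R}S\le\tfrac12\bigl(|S|^{t'}+|S^{*}|^{2-t'}\bigr)$ for $t'\in[1,2]$. To reach the exponents $t'\in[0,1)$ as well, I would apply the same inequality to $S^{*}$ and use $\mathcal{R}(S^{*})=\mathcal{R}S$, obtaining $\pm\mathcal{R}S\le\tfrac12\bigl(|S|^{1-a}+|S^{*}|^{1+a}\bigr)$, which is the same inequality with $t'=1-a\in[0,1]$. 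Hence $\pm\mathcal{R}S\le\tfrac12\bigl(|S|^{t'}+|S^{*}|^{2-t'}\bigr)$ for every $t'\in[0,2]$, in particular for $0\le t'\le\tfrac32$, and likewise $\pm\mathcal{R}T\le\tfrac12\bigl(|T|^{v'}+|T^{*}|^{2-v'}\bigr)$ for every $v'\in[0,\tfrac32]$, with $v'$ chosen independently of $t'$.

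Finally I would assemble the two estimates. Since $\mathcal{R}(S\pm T)=\mathcal{R}S\pm\mathcal{R}T$ and the bounds above hold with either sign on $\mathcal{R}S$ and on $\mathcal{R}T$, adding the relevant bound for $\pm\mathcal{R}S$ to the relevant bound for $\pm\mathcal{R}T$ gives, in each sign combination,
\[\pm\mathcal{R}(S\pm T)\le\frac{|S|^{t'}+|S^{*}|^{2-t'}}{2}+\frac{|T|^{v'}+|T^{*}|^{2-v'}}{2},\]
which is the assertion. The point that needs care is that the hypothesis of Theorem~\ref{23} forces a single pair $f,g$ throughout one application, and this is exactly why I first split $S\pm T$ by additivity of $\mathcal{R}$: each summand is then treated by its own application of Theorem~\ref{23}, which is what produces the two free parameters $t'$ and $v'$. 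Beyond that, the only mild technicalities are the passage to $S^{*}$ needed for exponents below $1$ and the routine identities $A\sharp I=A^{1/2}$ and $f(|S|)\,|S|=|S|^{1+a}$.
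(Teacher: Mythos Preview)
Your argument is correct, and it differs from the paper's own proof. The paper does \emph{not} split $S\pm T$ by additivity of $\mathcal R$ and then bound each summand; instead it substitutes into the four-operator inequality \eqref{5} using the polar decompositions $S=U|S|$, $T=V|T|$, taking $T_1=U|S|^{1-t}$, $T_2=|S|^{t}$, $T_3=V|T|^{1-v}$, $T_4=|T|^{v}$ with $0\le t,v\le1$. The identities $U|S|^{a}U^{*}=|S^{*}|^{a}$ then collapse the geometric-mean terms in \eqref{5} to $|S^{*}|^{2-3t/2}$ and $|T^{*}|^{2-3v/2}$, while $|T_2|^{3/2}=|S|^{3t/2}$, $|T_4|^{3/2}=|T|^{3v/2}$; setting $t'=3t/2$, $v'=3v/2$ gives the stated range $[0,\tfrac32]$.

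Your route is both simpler and slightly stronger. By going through the one-operator case (Theorem~\ref{23} with the identity in the first slot) you avoid the polar decomposition entirely, and because you keep the full freedom in $f,g$ rather than passing through the fixed choice $f=g=\sqrt{\cdot}$ of \eqref{5}, you actually obtain the inequality for all $t',v'\in[0,2]$, not just $[0,\tfrac32]$. Your computation $g^{2}(|S^{*}|)\sharp I=g(|S^{*}|)$ is also the correct simplification here (note that Remark~\ref{18}(i) as written carries a typo, with $g^{2}$ in place of $g$). The trade-off is that the paper's argument showcases the utility of the more elaborate four-operator bound \eqref{5}, whereas your argument shows the same corollary drops out of the elementary one-operator special case together with the trivial identity $\mathcal R(S^{*})=\mathcal R S$.
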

\begin{proof}
Let $S=U\left| S \right|$ and $T=V\left| T \right|$ be the polar decomposition of $S$ and $T$, respectively. For $0\leq t',v'\leq\frac{3}{2},$ let $t=\frac{2}{3}t,$ and $v=\frac{2}{3}v'.$ Then $0\leq t,v\leq 1.$ If we put $T_1=U{{\left| S \right|}^{1-t}}$, $T_2={{\left| S \right|}^{t}}$, $T_3=V{{\left| T \right|}^{1-v}}$, and $T_4={{\left| T \right|}^{v}}$, where $0\le t,v\le 1$, in the inequality \eqref{5}, we get
\[\pm \mathcal R\left( S\pm T \right)\le \frac{1}{2}\left( {{\left| S \right|}^{t'}}+{{\left| {{S}^{*}} \right|}^{2-t'}}+{{\left| T \right|}^{v'}}+{{\left| {{T}^{*}} \right|}^{2-v'}} \right),\text{ }0\le t',v'\le \frac{3}{2}\]
as desired.
\end{proof}

\begin{remark}
Taking $T=\textup iT$, in Corollary \ref{19}, we infer that
\[\pm \mathcal R\left( S\pm \textup iT \right)\le \frac{1}{2}\left( {{\left| S \right|}^{t'}}+{{\left| {{S}^{*}} \right|}^{2-t'}}+{{\left| T \right|}^{v'}}+{{\left| {{T}^{*}} \right|}^{2-v'}} \right).\]
In particular,
\[\mathcal \pm \mathcal RT\le \frac{1}{2}\left( {{\left| \mathcal RT \right|}^{t'}}+{{\left| \mathcal RT \right|}^{2-t'}}+{{\left| \mathcal IT \right|}^{v'}}+{{\left| \mathcal IT \right|}^{2-v'}} \right)\]
due to the Cartesian decomposition of $T$  (for any $X\in \mathcal B\left( \mathcal H \right)$, we have $X=\mathcal RX+\textup i\mathcal IX$).
\end{remark}

\begin{remark}
Lemma \ref{tao}, together with  \eqref{13}, implies
\[2{{s}_{j}}\left( ST \right)\le {{s}_{j}}\left( \left[ \begin{matrix}
   \left(S{{g}^{2}}\left( \left| {{T}^{*}} \right| \right){{S}^{*}}\right)\sharp {{\left| {{S}^{*}} \right|}^{2}} & ST  \\
   {{T}^{*}}{{S}^{*}} & f\left( \left| T \right| \right)\left| T \right|  \\
\end{matrix} \right] \right),\text{ }j=1,2,\ldots ,n,\]
for $S,T\in\mathcal{M}_n$.
This implies for any unitarily invariant norm ${{\left\| \cdot \right\|}_{u}}$ on $\mathcal{M}_n$,
\[{{\left\| ST \right\|}_{u}}\le \frac{1}{2}{{\left\| \left[ \begin{matrix}
  \left( S{{g}^{2}}\left( \left| {{T}^{*}} \right| \right){{S}^{*}}\right)\sharp {{\left| {{S}^{*}} \right|}^{2}} & ST  \\
   {{T}^{*}}{{S}^{*}} & f\left( \left| T \right| \right)\left| T \right|  \\
\end{matrix} \right] \right\|}_{u}}.\]

It is shown in \cite{14} that Lemma \ref{0} is equivalent to the following: If $A$ is self-adjoint, $B$ is positive, and $\pm A\le B$, then 
	\[{{s}_{j}}\left( A \right)\le {{s}_{j}}\left( B\oplus B \right)\]
for $j=1,2,\ldots ,n$. Thus, for $S,T\in\mathcal{M}_n$, Corollary \ref{19} gives
\[\begin{aligned}
  & {{s}_{j}}\left( \mathcal R\left( S\pm T \right) \right) \\ 
 & \le \frac{1}{2}{{s}_{j}}\left( {{\left| S \right|}^{t'}}+{{\left| {{S}^{*}} \right|}^{2-t'}}+{{\left| T \right|}^{v'}}+{{\left| {{T}^{*}} \right|}^{2-v'}}\oplus {{\left| S \right|}^{t'}}+{{\left| {{S}^{*}} \right|}^{2-t'}}+{{\left| T \right|}^{v'}}+{{\left| {{T}^{*}} \right|}^{2-v'}} \right). \\ 
\end{aligned}\]
Letting $t'=v'=1$ and $T=O$, we get
\[{{s}_{j}}\left( \mathcal RS \right)\le \frac{1}{2}{{s}_{j}}\left( {{\left| S \right|}}+{{\left| {{S}^{*}} \right|}}\oplus {{\left| S \right|}}+{{\left| {{S}^{*}} \right|}} \right)\]
which is a known result \cite[Theorem 6]{15}.
\end{remark}

\begin{corollary}\label{cor_sing_ST}
Let $T_1,T_2,T_3,T_4\in\mathcal{M}_n$ and let $f,g:[0,\infty)\to [0,\infty)$ be non-negative continuous functions such that $f(t)g(t)=t.$ Then for $j=1,\cdots,n,$

\begin{equation}\label{o1}
\begin{aligned}
  & {{s}_{j}}\left( {{T}_{1}}{{T}_{2}}\pm {{T}_{3}}{{T}_{4}} \right) \\ 
 & \le {{s}_{j}}\left( {{T}_{1}}{{g}^{2}}\left( \left| {{{{T}_{2}}}^{*}} \right| \right){{{{T}_{1}}}^{*}}\sharp  {{\left| {{{{T}_{1}}}^{*}} \right|}^{2}}+{{T}_{3}}{{g}^{2}}\left( \left| {{{{T}_{4}}}^{*}} \right| \right){{{{T}_{3}}}^{*}}\sharp  {{\left| {{{{T}_{3}}}^{*}} \right|}^{2}}\oplus f\left( \left| {{T}_{2}} \right| \right)\left| {{T}_{2}} \right|+f\left( \left| {{T}_{4}} \right| \right)\left| {{T}_{4}} \right| \right). \\ 
\end{aligned}
\end{equation}

In particular, if $S,T\in\mathcal{M}_n$, then for $j=1,\cdots,n,$
\[{{s}_{j}}\left( {S}\pm \textup i{T} \right)\le {{s}_{j}}\left( g\left( \left| {{{S}}^{*}} \right| \right)+g\left( \left| {{{T}}^{*}} \right| \right)\oplus f\left( \left| {S} \right| \right)\left| {S} \right|+f\left( \left| {T} \right| \right)\left| {T} \right| \right)\]
and
\[{{s}_{j}}\left( {S}\pm \textup i{T} \right)\le {{s}_{j}}\left( \left| {{{S}}^{*}} \right|+\left| {{{T}}^{*}} \right|\oplus \left| {S} \right|+\left| {T} \right| \right).\]	
\end{corollary}
\begin{proof}
If $T_1,T_2,T_3,T_4\in\mathcal{B}(\mathcal{H})$, then by  \eqref{13}, we have
	\[\left[ \begin{matrix}
   {{T}_{1}}{{g}^{2}}\left( \left| {{{{T}_{2}}}^{*}} \right| \right){{{{T}_{1}}}^{*}}\sharp  {{\left| {{{{T}_{1}}}^{*}} \right|}^{2}} & {{T}_{1}}{{T}_{2}}  \\
   {{{{T}_{2}}}^{*}}{{{{T}_{1}}}^{*}} & f\left( \left| {{T}_{2}} \right| \right)\left| {{T}_{2}} \right|  \\
\end{matrix} \right]\ge O,\]
and
	\[\left[ \begin{matrix}
   {{T}_{3}}{{g}^{2}}\left( \left| {{{{T}_{4}}}^{*}} \right| \right){{{{T}_{3}}}^{*}}\sharp  {{\left| {{{{T}_{3}}}^{*}} \right|}^{2}} & \pm {{T}_{3}}{{T}_{4}}  \\
   \pm {{{{T}_{4}}}^{*}}{{{{T}_{3}}}^{*}} & f\left( \left| {{T}_{4}} \right| \right)\left| {{T}_{4}} \right|  \\
\end{matrix} \right]\ge O.\]
Therefore,
\[\left[ \begin{matrix}
   {{T}_{1}}{{g}^{2}}\left( \left| {{{{T}_{2}}}^{*}} \right| \right){{{{T}_{1}}}^{*}}\sharp  {{\left| {{{{T}_{1}}}^{*}} \right|}^{2}}+{{T}_{3}}{{g}^{2}}\left( \left| {{{{T}_{4}}}^{*}} \right| \right){{{{T}_{3}}}^{*}}\sharp  {{\left| {{{{T}_{3}}}^{*}} \right|}^{2}} & {{T}_{1}}{{T}_{2}}\pm {{T}_{3}}{{T}_{4}}  \\
   {{{{T}_{2}}}^{*}}{{{{T}_{1}}}^{*}}\pm {{{{T}_{4}}}^{*}}{{{{T}_{3}}}^{*}} & f\left( \left| {{T}_{2}} \right| \right)\left| {{T}_{2}} \right|+f\left( \left| {{T}_{4}} \right| \right)\left| {{T}_{4}} \right|  \\
\end{matrix} \right]\ge O.\]
By Lemma \ref{0}, we get
\[
\begin{aligned}
  & {{s}_{j}}\left( {{T}_{1}}{{T}_{2}}\pm {{T}_{3}}{{T}_{4}} \right) \\ 
 & \le {{s}_{j}}\left( {{T}_{1}}{{g}^{2}}\left( \left| {{{{T}_{2}}}^{*}} \right| \right){{{{T}_{1}}}^{*}}\sharp  {{\left| {{{{T}_{1}}}^{*}} \right|}^{2}}+{{T}_{3}}{{g}^{2}}\left( \left| {{{{T}_{4}}}^{*}} \right| \right){{{{T}_{3}}}^{*}}\sharp  {{\left| {{{{T}_{3}}}^{*}} \right|}^{2}}\oplus f\left( \left| {{T}_{2}} \right| \right)\left| {{T}_{2}} \right|+f\left( \left| {{T}_{4}} \right| \right)\left| {{T}_{4}} \right| \right) \\ 
\end{aligned}
\]
which proves \eqref{o1}. \\
For the other two inequalities, let ${{T}_{1}}={{T}_{3}}=I$, ${{T}_{4}}=\textup iT$, and ${{T}_{2}}=S$, in \eqref{o1}. Then
	\[{{s}_{j}}\left( {S}\pm \textup i{T} \right)\le {{s}_{j}}\left( g\left( \left| {{{S}}^{*}} \right| \right)+g\left( \left| {{{T}}^{*}} \right| \right)\oplus f\left( \left| {S} \right| \right)\left| {S} \right|+f\left( \left| {T} \right| \right)\left| {T} \right| \right).\]
By letting $f(t)=t^{\mu}$ and $g(t)=t^{1-\mu}$ for $0\leq\mu\leq1$ in the latter inequality we obtain 
\[{{s}_{j}}\left( {S}\pm \textup i{T} \right)\le {{s}_{j}}\left( {{\left| {{{S}}^{*}} \right|}^{1-\mu}}+{{\left| {{{T}}^{*}} \right|}^{1-\mu}}\oplus {{\left| {S} \right|}^{1+\mu}}+{{\left| {T} \right|}^{1+\mu}} \right),\]
which implies the last required inequality by letting $\mu=0.$
This completes the proof.
\end{proof}

\begin{remark}
A related inequality to Corollary \ref{cor_sing_ST}, in \cite{10} it has been shown that for positive semi-definite matrices $S$ and $T$, one has	
\[{{s}_{j}}\left( S+\textup iT \right)\le {{s}_{j}}\left( S+T \right).\]
\end{remark}	 

\section{Numerical Radius Inequalities}
Theorem \ref{23} can be utilized to obtain an upper bound for the numerical radius of the product of  two operators. An exciting application of the following result can be seen in Remark \ref{rem_w_product} below. We remark that in recent years, a considerable research work is devoted to numerical radius inequalities, as one can see in \cite{ bks,bhunia,feki,r2,zamani}.
\begin{corollary}\label{10}
Let $S,T\in \mathcal B\left( \mathcal H \right)$. If $f,g$ are non-negative continuous functions on $\left[ 0,\infty  \right)$ satisfying $f\left( t \right)g\left( t \right)=t$, $\left( t\ge 0 \right)$, then
\[\omega \left( ST \right)\le \frac{1}{2}\left\| S{{g}^{2}}\left( \left| {{T}^{*}} \right| \right){{S}^{*}}\sharp{{\left| {{S}^{*}} \right|}^{2}}+{{f}}\left( \left| T \right| \right){{\left| T \right|}} \right\|.\]
In particular,
\begin{equation}\label{12}
\omega \left( ST \right)\le \frac{1}{2}\left\| S{{\left| {{T}^{*}} \right|}^{2\left( 1-t \right)}}{{S}^{*}}\sharp{{\left| {{S}^{*}} \right|}^{2}}+{{\left| T \right|}^{1+t}} \right\|,0\le t\le 1.
\end{equation}
\end{corollary}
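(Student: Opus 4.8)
The plan is to derive this numerical radius estimate directly from Theorem \ref{23} by means of the classical identity $\omega(A)=\sup_{\theta\in\mathbb{R}}\|\mathcal{R}(e^{\textup i\theta}A)\|$, which holds for every $A\in\mathcal B(\mathcal H)$ because $\mathcal R(e^{\textup i\theta}A)$ is self-adjoint and $\langle \mathcal R(e^{\textup i\theta}A)x,x\rangle=\mathcal R(e^{\textup i\theta}\langle Ax,x\rangle)$. The key observation is that the right-hand side of \eqref{14} is unchanged when $S$ is rotated by a unimodular scalar.

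First I would fix $\theta\in\mathbb{R}$ and apply Theorem \ref{23} with $S$ replaced by $e^{\textup i\theta}S$ (keeping the same $T,f,g$). On the left this produces $\pm\mathcal R\big((e^{\textup i\theta}S)T\big)=\pm\mathcal R\big(e^{\textup i\theta}(ST)\big)$. On the right, one checks that $\big|(e^{\textup i\theta}S)^*\big|^2=(e^{\textup i\theta}S)(e^{\textup i\theta}S)^*=SS^*=|S^*|^2$, that $(e^{\textup i\theta}S)g^2(|T^*|)(e^{\textup i\theta}S)^*=Sg^2(|T^*|)S^*$, and that the term $f(|T|)|T|$ does not involve $S$; hence the right-hand side of \eqref{14} equals, independently of $\theta$,
$$M:=\frac{1}{2}\left(Sg^2(|T^*|)S^*\sharp|S^*|^2+f(|T|)|T|\right).$$
Thus $\pm\mathcal R(e^{\textup i\theta}ST)\le M$ for all $\theta$.

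Next, since $f\ge 0$ on $[0,\infty)$ gives $f(|T|)|T|\ge O$ (the two factors commute) and the geometric mean of positive operators is positive, we have $M\ge O$; consequently the two-sided inequality $\pm\mathcal R(e^{\textup i\theta}ST)\le M$ upgrades to $\|\mathcal R(e^{\textup i\theta}ST)\|\le\|M\|$. Taking the supremum over $\theta$ and invoking the identity quoted above yields $\omega(ST)\le\|M\|$, which is the first asserted inequality. For the ``in particular'' part I would specialize to $f(x)=x^{t}$ and $g(x)=x^{1-t}$ with $0\le t\le 1$: these are non-negative and continuous on $[0,\infty)$ with $f(x)g(x)=x$, and they give $g^2(|T^*|)=|T^*|^{2(1-t)}$ and $f(|T|)|T|=|T|^{1+t}$, which is exactly \eqref{12}.

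There is no genuine obstacle in this argument. The only two points requiring a line of care are the verification that replacing $S$ by $e^{\textup i\theta}S$ leaves the majorant $M$ unchanged, and the remark that $M\ge O$ so that the Löwner inequality can be passed to the operator norm; the identity $\omega(A)=\sup_\theta\|\mathcal R(e^{\textup i\theta}A)\|$ is standard and may be used without proof.
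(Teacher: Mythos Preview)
Your proof is correct and follows essentially the same route as the paper: replace $S$ by $e^{\textup i\theta}S$ in Theorem \ref{23}, observe that the majorant is $\theta$-independent, pass to norms, and take the supremum over $\theta$ via the identity $\omega(A)=\sup_{\theta}\|\mathcal R(e^{\textup i\theta}A)\|$. Your write-up is in fact a bit more careful, since you explicitly justify why the right-hand side is unchanged under the rotation and why $M\ge O$ allows passage from the two-sided L\"owner bound to the norm inequality.
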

\begin{proof}
If we replace $S$ by ${{e}^{\textup i\theta }}S$, $\left( \theta \in \mathbb{R} \right)$, in \eqref{14}, we get
\[\mathcal R\left( {{e}^{\textup i\theta }}ST \right)\le \frac{S{{g}^{2}}\left( \left| {{T}^{*}} \right| \right){{S}^{*}}\sharp {{\left| {{S}^{*}} \right|}^{2}}+{{f}}\left( \left| T \right| \right) {{\left| T \right|}}}{2}.\]
This operator inequality  implies to the following norm inequality
	\[\left\| \mathcal R\left( {{e}^{\textup i\theta }}ST \right) \right\|\le \frac{1}{2}\left\| S{{g}^{2}}\left( \left| {{T}^{*}} \right| \right){{S}^{*}}\sharp {{\left| {{S}^{*}} \right|}^{2}}+{{f}}\left( \left| T \right| \right) {{\left| T \right|}} \right\|.\]
Now, the result follows by taking supremum over $\theta \in \mathbb{R}$, since \cite{yamazaki}
	\[\underset{\theta \in \mathbb{R}}{\mathop{\sup }}\,\left\| \mathcal R{{e}^{\textup i\theta }}A \right\|=\omega \left( A \right).\]
The inequality \eqref{12} follows from the above inequality by taking $g\left( x \right)={{x}^{1-t}}$ and $f\left( x \right)={{x}^{t}}$ with $ 0\le t\le 1$.
\end{proof}
\begin{remark}\label{rem_w_product}
 In this remark we explain the importance of Corollary \ref{10}, where we retrieve two celebrated inequalities for the numerical radius. Thus, Corollary \ref{10} is a generalized form that can be used to obtain several inequalities upon choosing appropriate functions and parameters.
\hfill
\begin{itemize}
\item[(i)] The case $t=1$ in the inequality \eqref{12}, reduces to
\[\omega \left( ST \right)\le \frac{1}{2}\left\| {{\left| {{S}^{*}} \right|}^{2}}+{{\left| T \right|}^{2}} \right\|.\]
The above inequality has been given in \cite{4}.
\item[(ii)] If we set $S=I$ and $t=0$ in the inequality \eqref{12}, we get
\[\omega \left( T \right)\le \frac{1}{2}\left\| \left| T \right|+\left| {{T}^{*}} \right| \right\|\]
which was proved in \cite{5}.
\end{itemize}
\end{remark}

\begin{remark}
It must be emphasized that inequality \eqref{12} provides a non-trivial estimates for the numerical radius of the product of two operators. To show this, we recall that Kittaneh \cite[Theorem 2]{4}  proved if ${{T}_{i}}\in \mathcal B\left( \mathcal H \right)$, $\left( i=1,2,\ldots ,6 \right)$, then for any $0\le t\le 1$,
{\small
\begin{equation}\label{26}
\omega \left( {{T}_{1}}{{T}_{2}}{{T}_{3}}+{{T}_{4}}{{T}_{5}}{{T}_{6}} \right)\le \frac{1}{2}\left\| {{T}_{1}}{{\left| T_{2}^{*} \right|}^{2\left( 1-t \right)}}T_{1}^{*}+T_{3}^{*}{{\left| T_{2} \right|}^{2t}}{{T}_{3}}+{{T}_{4}}{{\left| T_{5}^{*} \right|}^{2\left( 1-t \right)}}T_{4}^{*}+T_{6}^{*}{{\left| {{T}_{5}} \right|}^{2t}}{{T}_{6}} \right\|.
\end{equation}}
Setting ${{T}_{1}}=S$, ${{T}_{2}}=T$, ${{T}_{3}}=I$, and ${{T}_{5}}=O$, in \eqref{26}. Therefore, we have
\begin{equation}\label{24}
\omega \left( ST \right)\le \frac{1}{2}\left\| S{{\left| {{T}^{*}} \right|}^{2\left( 1-t \right)}}{{S}^{*}}+{{\left| T \right|}^{2t}} \right\|.
\end{equation}
The case ${{T}_{1}}=S$, ${{T}_{2}}=I$, ${{T}_{3}}=T$, ${{T}_{5}}=O$, also implies
\begin{equation}\label{25}
\omega \left( ST \right)\le \frac{1}{2}\left\| {{\left| {{S}^{*}} \right|}^{2}}+{{\left| T \right|}^{2}} \right\|.
\end{equation}
From the relations \eqref{24} and \eqref{25}, we infer that
\begin{equation}\label{31}
\omega \left( ST \right)\le \frac{1}{4}\left\| S{{\left| {{T}^{*}} \right|}^{2\left( 1-t \right)}}{{S}^{*}}+{{\left| T \right|}^{2t}} \right\|+\frac{1}{4}\left\| {{\left| {{S}^{*}} \right|}^{2}}+{{\left| T \right|}^{2}} \right\|.
\end{equation}
On the other hand, it follows from the inequality \eqref{12} that
\[\begin{aligned}
   \omega \left( ST \right)&\le \frac{1}{2}\left\| \left( S{{\left| {{T}^{*}} \right|}^{2\left( 1-t \right)}}{{S}^{*}}\sharp {{\left| {{S}^{*}} \right|}^{2}} \right)+{{\left| T \right|}^{1+t}} \right\| \\ 
 & =\frac{1}{2}\left\| \left( S{{\left| {{T}^{*}} \right|}^{2\left( 1-t \right)}}{{S}^{*}}\sharp {{\left| {{S}^{*}} \right|}^{2}} \right)+\left( {{\left| T \right|}^{2t}}\sharp {{\left| T \right|}^{2}} \right) \right\| \\ 
 & \le \frac{1}{2}\left\| \left( S{{\left| {{T}^{*}} \right|}^{2\left( 1-t \right)}}{{S}^{*}}+{{\left| T \right|}^{2t}} \right)\sharp \left( {{\left| {{S}^{*}} \right|}^{2}}+{{\left| T \right|}^{2}} \right) \right\| \\
 &\quad \text{(by \cite[Corollary I. 2.1]{10})}\\ 
 & \le \frac{1}{4}\left\| S{{\left| {{T}^{*}} \right|}^{2\left( 1-t \right)}}{{S}^{*}}+{{\left| T \right|}^{2t}}+{{\left| {{S}^{*}} \right|}^{2}}+{{\left| T \right|}^{2}} \right\| \\ 
  &\quad \text{(by the operator arithmetic-geometric mean inequality)}\\
 & \le \frac{1}{4}\left\| S{{\left| {{T}^{*}} \right|}^{2\left( 1-t \right)}}{{S}^{*}}+{{\left| T \right|}^{2t}} \right\|+\frac{1}{4}\left\| {{\left| {{S}^{*}} \right|}^{2}}+{{\left| T \right|}^{2}} \right\|.  
\end{aligned}\]
Indeed our estimate \eqref{12} is a refinement of the inequality \eqref{31}, obtained via Kittaneh inequality \eqref{26}.
\end{remark}

\begin{theorem}
Let $T$ be $\left( \alpha ,\beta  \right)$-normal operator. Then
\begin{equation}\label{22}
\max \left\{ \sqrt{1+\frac{1}{{{\beta }^{2}}}},\sqrt{1+{{\alpha }^{2}}} \right\}\frac{1}{2}\left\| T \right\|\le \omega \left( T \right).
\end{equation}
\end{theorem}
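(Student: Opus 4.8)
The plan is to pair the defining inequalities \eqref{21} of an $\left(\alpha,\beta\right)$-normal operator with the classical lower bound $4\omega^2\left(T\right)\ge\big\|\,|T|^2+|T^*|^2\,\big\|$ for the numerical radius. First I would record this lower bound. For a unit vector $x\in\mathcal H$, the parallelogram law gives
\[\left\|Tx\right\|^2+\left\|T^*x\right\|^2=\frac12\left\|\left(T+T^*\right)x\right\|^2+\frac12\left\|\left(T-T^*\right)x\right\|^2=2\left\|\mathcal RTx\right\|^2+2\left\|\mathcal ITx\right\|^2.\]
Since $\mathcal RT$ and $\mathcal IT$ are self-adjoint, $\left\|\mathcal RT\right\|=\sup_{\|x\|=1}\left|\mathcal R\langle Tx,x\rangle\right|\le\omega\left(T\right)$ and, likewise, $\left\|\mathcal IT\right\|=\left\|\mathcal R\left(-\textup iT\right)\right\|\le\omega\left(T\right)$. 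Hence the right-hand side above is at most $4\omega^2\left(T\right)$, and because $\left\|Tx\right\|^2+\left\|T^*x\right\|^2=\big\langle\left(|T|^2+|T^*|^2\right)x,x\big\rangle$, taking the supremum over unit vectors yields $\big\|\,|T|^2+|T^*|^2\,\big\|\le 4\omega^2\left(T\right)$.

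Next I would feed in the $\left(\alpha,\beta\right)$-normality. From $|T^*|^2\le\beta^2|T|^2$ we get $|T|^2\ge\beta^{-2}|T^*|^2$, whence $|T|^2+|T^*|^2\ge\left(1+\beta^{-2}\right)|T^*|^2$; taking norms and using $\big\|\,|T^*|^2\,\big\|=\left\|T^*\right\|^2=\left\|T\right\|^2$ gives $\big\|\,|T|^2+|T^*|^2\,\big\|\ge\left(1+\beta^{-2}\right)\left\|T\right\|^2$. Symmetrically, from $\alpha^2|T|^2\le|T^*|^2$ one obtains $|T|^2+|T^*|^2\ge\left(1+\alpha^2\right)|T|^2$, so $\big\|\,|T|^2+|T^*|^2\,\big\|\ge\left(1+\alpha^2\right)\left\|T\right\|^2$. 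Combining these two bounds with the estimate of the first paragraph yields $4\omega^2\left(T\right)\ge\max\left\{1+\beta^{-2},\,1+\alpha^2\right\}\left\|T\right\|^2$, and extracting square roots produces exactly \eqref{22}.

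I do not expect a genuine obstacle: the argument is essentially a two-line reduction once $4\omega^2\left(T\right)\ge\big\|\,|T|^2+|T^*|^2\,\big\|$ is available. The only points needing a word of care are the elementary norm identity $\big\|\,|T^*|^2\,\big\|=\left\|T\right\|^2$ (from $\big\|\,|A|^2\,\big\|=\left\|A\right\|^2$ with $A=T^*$) and the standard fact that the norm of the real or imaginary part of an operator never exceeds its numerical radius, which follows from $\omega\left(A\right)=\sup_{\theta\in\mathbb R}\left\|\mathcal R\left(e^{\textup i\theta}A\right)\right\|$ as already used in this paper. If one prefers, the inequality $4\omega^2\left(T\right)\ge\big\|\,|T|^2+|T^*|^2\,\big\|$ can simply be quoted as classical, after which only the short computation with \eqref{21} remains.
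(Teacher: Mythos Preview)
Your argument is correct and reaches the same endpoint as the paper's proof: both ultimately rely on $\|\mathcal RT\|,\|\mathcal IT\|\le\omega(T)$ together with the defining inequalities \eqref{21}. The packaging, however, is different. You first isolate the classical lower bound $\big\|\,|T|^2+|T^*|^2\,\big\|\le 4\omega^2(T)$ via the vector parallelogram identity, and only then feed in $(\alpha,\beta)$-normality as an operator inequality $(1+\alpha^2)|T|^2\le|T|^2+|T^*|^2$ (respectively $(1+\beta^{-2})|T^*|^2\le|T|^2+|T^*|^2$), invoking nothing beyond the monotonicity of the operator norm on positive operators. The paper instead starts from $(1+\alpha^2)\|\,|T|^2\,\|$, applies the norm triangle inequality $2\|X\|\le\|X+Y\|+\|X-Y\|$ with $Y=T^2+(T^*)^2$, inserts $(\alpha,\beta)$-normality inside each norm, and then recognizes the resulting expressions as $\|(T+T^*)^2\|$ and $\|(T-T^*)^2\|$. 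Your route is more modular (the Kittaneh-type bound is separated out and could simply be quoted) and the monotonicity step you use is completely transparent; the paper's route is self-contained and avoids citing that bound, at the cost of a slightly more delicate replacement step inside the norms.
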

\begin{proof}
We have
\[\begin{aligned}
   \frac{1+{{\alpha }^{2}}}{2}{{\left\| T \right\|}^{2}}&=\frac{2\left( 1+{{\alpha }^{2}} \right)}{4}\left\| \;{{\left| T \right|}^{2}} \;\right\| \\ 
 & \le \frac{1}{4}\left( \left\| \left( 1+{{\alpha }^{2}} \right){{\left| T \right|}^{2}}+{{T}^{2}}+{{\left( {{T}^{*}} \right)}^{2}} \right\|+\left\| \left( 1+{{\alpha }^{2}} \right){{\left| T \right|}^{2}}-\left( {{T}^{2}}+{{\left( {{T}^{*}} \right)}^{2}} \right) \right\| \right) \\ 
 & \le \frac{1}{4}\left( \left\| {{\left| T \right|}^{2}}+{{\left| {{T}^{*}} \right|}^{2}}+{{T}^{2}}+{{\left( {{T}^{*}} \right)}^{2}} \right\|+\left\| {{\left| T \right|}^{2}}+{{\left| {{T}^{*}} \right|}^{2}}-\left( {{T}^{2}}+{{\left( {{T}^{*}} \right)}^{2}} \right) \right\| \right) \\ 
 & =\left\| {{\left( \frac{T+{{T}^{*}}}{2} \right)}^{2}} \right\|+\left\| {{\left( \frac{T-{{T}^{*}}}{2\textup i} \right)}^{2}} \right\| \\ 
 & ={{\left\| \mathcal RT \right\|}^{2}}+{{\left\| \mathcal IT \right\|}^{2}} \\ 
 & \le 2{{\omega }^{2}}\left( T \right),
\end{aligned}\]
where the triangle inequality for the usual operator norm implies the first inequality, the second inequality is achieved because of the first inequality in \eqref{21}, and the last inequality is obtained from the fact that
\[\left| \left\langle Tx,x \right\rangle  \right|=\sqrt{{{\left\langle \mathcal RTx,x \right\rangle }^{2}}+{{\left\langle \mathcal ITx,x \right\rangle }^{2}}};\text{ }x\in \mathcal H,\left\| x \right\|=1.\]
Consequently,
\[\sqrt{1+{{\alpha }^{2}}}\;\frac{1}{2}\left\| T \right\|\le \omega \left( T \right).\]
In the same way, one can show that
\[\sqrt{1+\frac{1}{{{\beta }^{2}}}}\;\frac{1}{2}\left\| T \right\|\le \omega \left( T \right).\]
Combining the above two inequalities implies the desired result \eqref{22}.
\end{proof}

Of course, $1\le \max \left\{ \sqrt{1+\frac{1}{{{\beta }^{2}}}},\sqrt{1+{{\alpha }^{2}}} \right\}$. Consequently, \eqref{22} improves considerably the following well-known inequality for general operators
\begin{equation}\label{eq_1/2norm}
	\frac{1}{2}\left\| T \right\|\le \omega \left( T \right).
\end{equation}

It is also interesting to note that in the case of the hyponormal operator (in the sense that ${{\left| T \right|}^{2}}\le {{\left| {{T}^{*}} \right|}^{2}}$), by applying the same method as in the above (in fact $\alpha =1$), we get the following refinement of \eqref{eq_1/2norm} for hyponormal operators
\[\frac{1}{\sqrt{2}}\left\| T \right\|\le \omega \left( T \right).\]

We remark that the last inequality is known for accretive-dissipative operators, see \cite{9}.

\begin{remark}
We remark here that \cite[(3.1)]{8} should be written in the following correct form 
\[\left( 1+{{\alpha }^{2}} \right){{\left\| T \right\|}^{2}}\le \frac{1}{2}\left( {{\left\| T+{{T}^{*}} \right\|}^{2}}+{{\left\| T-{{T}^{*}} \right\|}^{2}} \right).\]
\end{remark}

\section*{Acknowledgment} The authors would like to express their sincere thanks to the anonymous reviewers, whose comments have considerably improved the quality of this paper. Also, the authors are grateful to Professor J. Bourin for some private communications related to this work.

\vskip 0.3 true cm 

\noindent{\tiny (M. Sababheh) Department of basic sciences, Princess Sumaya University for Technology, Amman, Jordan}
	
\noindent	{\tiny\textit{E-mail address:} sababheh@psut.edu.jo; sababheh@yahoo.com}

\vskip 0.3 true cm

\noindent{\tiny (H. R. Moradi) Department of Mathematics, Payame Noor University (PNU), P.O. Box, 19395-4697, Tehran, Iran
	
\noindent	\textit{E-mail address:} hrmoradi@mshdiau.ac.ir}

\end{document}